\theoremstyle{definition}
\newtheorem{definition}{Definition}[section]
\theoremstyle{remark}
\newtheorem{remark}{Remark}[section]
\newtheoremstyle{theorem}{0.5cm}{0.2cm}{\slshape}{ }{\bfseries}{.}{ }{}
\theoremstyle{theorem}
\newtheorem{theorem}{Theorem}[section]
\newtheorem{prop}[theorem]{Proposition}
\newtheorem{lemma}[theorem]{Lemma}
\newtheorem{corollary}[theorem]{Corollary}
\newtheorem{assumption}[theorem]{Assumption}
\newcommand{\R}{{\mathbb{R}}}
\newcommand{\E}{{\mathbb{E}}}
\newcommand{\del}{{\delta}}
\DeclareMathSymbol\square  {\mathord}{AMSa}{"03}
\renewcommand{\del}{\delta}
\newcommand{\N}{{\mathbb N}}
\newcommand{\dd}{{\rm d}}
\DeclareMathOperator{\dom}{{dom}}
\begin{document}
%\color{red}
\title{On independence and large deviations for sublinear expectations}

    \author{Pedro Ter\' an}
	\address{Universidad de Oviedo, E-33071 Gij\'on, Spain}
	\email{teranpedro@uniovi.es}
	
		\author{Jos\'e M. Zapata}
	\address{Universidad de Murcia.  Dpto. de Estad\'{i}stica e Investigaci\'{o}n Operativa,  30100 Espinardo, Murcia, Spain}
	\email{jmzg1@um.es}
	
	\date{\today}

%\author{Pedro Ter\' an\\
%{ }\\
%{\small Universidad de Oviedo}\\
%{\small E-33071 Gij\'on, Spain}
%}

%\author{Jos\'{e} M. Zapata\\
%{ }\\
%{\small Departamento de Estad\'{i}stica e I.O.}\\
%{\small Universidad de Murcia}\\
%{\small Murcia, Spain}
%}

\date{}
\maketitle

\begin{abstract}
We prove by counterexample that a large deviation principle established by Chen and Feng [{\em Comm. Statist. Theory Methods} {\bf 45} (2016), 400--412] in the framework of sublinear expectations is incorrect. That implies that the rate function cannot, in general, be obtained by computing the Fenchel transform of the cumulant generating function, as is the case for ordinary probabilities. We derive a corrected version of that result and show that the original presentation holds under a stronger independence assumption.
\end{abstract}

\vskip 1 true cm
\noindent{\bf Keywords:} Large deviations; Cram\'er theorem; G\"artner--Ellis theorem; Negatively dependent variables;  Sublinear expectation; Upper probability.

\section{Introduction}

Large deviation principles \cite{DupEll,dembo,Puh,Cer,Var} concern the exponential rate of convergence of the probabilities in statements like, e.g., the weak law of large numbers, and their study was initiated by Cram\'er \cite{Cra}. On the other hand, starting with papers like \cite{Bad,WalFin} there arose a slow realization that the full force of Kolmogorov's axioms is not necessary for analogs of classical probabilistic limit theorems to hold.

A connection between large deviations and non-additive set functions was made when working out rigorously the analogy between large deviation principles and the weak convergence in the central limit theorem, see e.g. \cite{Puh91,O'BrVer,Aki96,Ger,Aki99}. Then large deviation results  for non-additive set functions or non-additive expectations started to appear \cite{Gul,Ned,Hu,GaoJia}. In recent years, most results have been developed in the framework of sublinear expectations \cite{Pen19}, see e.g. \cite{GaoXu,CheXio,Cao,LiuZha}. A recent abstract approach, which also covers sublinear expectations, has been developed in \cite{kupper,zapata}; we refer to \cite{Zap} for a summarized account.

Within this stream of research, Chen and Feng \cite{CheFen} and Feng \cite{Feng} considered a negative dependence property rather than independence. This is interesting since some proposed definitions of independence in the non-additive setting (e.g., \cite{Pen19}) are very stringent \cite{HuLi,CriticaPeng}.

Unfortunately, Chen and Feng's result \cite[Theorem 3.1]{CheFen} turns out to be incorrect (see Corollary \ref{ggg} below). %This also casts doubt on the subsequent moderate deviation principle \cite[Theorem 4.1]{CheFen}, as its proof depends on the aforementioned result, although the counterexample does not directly invalidate it. 
Specifically, we present a counterexample that satisfies the assumptions of \cite[Theorem 3.1]{CheFen}, yet fails to satisfy the lower bound stated in that theorem. 
This implies that if a large deviation principle holds for negatively dependent variables under a sublinear expectation, the rate function cannot, in general, be obtained as the Fenchel--Legendre transform of the cumulant generating function as is the case for standard probabilities.

On the other hand, relying on the version of the G\"{a}rtner-Ellis theorem for sublinear expectations recently established in \cite{zapata}, we confirm that, although the lower bound in \cite[Theorem 3.1]{CheFen} fails, the upper bound in that theorem holds. Furthermore, we prove that the lower bound in \cite[Theorem 3.1]{CheFen} remains valid if the stronger assumption of independence, rather than negative dependence, is imposed. We also present a corrected version of Chen and Feng's theorem with an explicit calculation of the rate function.

These results have the notable implication that the relationship between limit theorems for sublinear expectations in the cases of independence and negative dependence is more intricate than in the classical setting.

The remainder of the paper is organized as follows. Section \ref{sec:preliminaries} covers the necessary preliminaries. In Section \ref{counter}, we present the counterexample announced above. In Section \ref{sec:GartnerEllis} we discuss the validity of the bounds in \cite[Theorem 3.1]{CheFen}. Section \ref{sec:conclusions} is devoted to some  concluding remarks. The proofs of some auxiliary results are presented in an appendix.

\section{Preliminaries}\label{sec:preliminaries}

Let $(\Omega,\mathcal A)$ be a measurable space. The complement of an event $A$ will be denoted by $A^c$, and its indicator function by $I_A$. The expectation against a probability measure $P$ will be denoted by $E_P$.

A {\em capacity} is a set function $v$ from $\mathcal A$ to $[0,1]$ such that $v(\O)=0$, $v(\Omega)=1$ and $v(A)\le v(B)$ whenever $A\subset B$. The {\em dual capacity} to $v$ is $\overline v\colon A\in\mathcal A\mapsto 1-v(A^c)$. The capacity {\em induced} by a random variable $X$ from $v$ (or the {\em distribution} of $X$ under $v$) is
$$v_X:A\in\mathcal B_\R\mapsto v(X\in A),$$
where $\mathcal B_\R$ denotes the Borel $\sigma$-algebra on $\mathbb{R}$.

A capacity is {\em 2-monotone} if
$$v(A\cup B)+v(A\cap B)\ge v(A)+v(B)$$
and, more generally, {\em $n$-monotone} if
$$\nu(\bigcup_{i=1}^n A_i)\ge\sum_{I\subset\{1,\ldots,n\},I\neq\O}(-1)^{|I|+1}\nu(\bigcap_{i\in I}A_i).$$
A capacity is called {\em completely monotone} (or totally monotone, or monotone of infinite order) if it is $n$-monotone for each $n\in\N$. 
It is called {\em continuous} or a {\em fuzzy measure} if $v(A_n)\to v(A)$ whenever $A_n\nearrow A$ or $A_n\searrow A$. Notice every probability distribution in $\R$ is a continuous completely monotone capacity.

The {\em Choquet integral} of a random variable $X$ against a capacity $v$ is
$$E_v[X]=\int_0^\infty v(X\ge t)\dd t-\int_{-\infty}^0[1-v(X\ge t)]\dd t,$$
provided both improper Riemann integrals exist (it can be $+\infty$ or $-\infty$ if one of the integrals is allowed to be infinite). If $P$ is a probability measure, the Choquet integral equals the Lebesgue integral and thus there is no ambiguity with the notation $E_P$.

A capacity $\mathbb V$ is called an {\em upper probability} if there exists a set of probability measures $\mathcal P$ such that $\mathbb V(A)=\sup_{P\in\mathcal P} P(A)$ for each $A\in\mathcal A$. An upper probability has an {\em upper expectation} associated as per the formula $\E[X]=\sup_{P\in\mathcal P}E_P[X]$. Observe $\mathbb V(A)=\E[I_A]$. The functional $\E$ is a {\em sublinear expectation} in the sense of Peng \cite{Pen19}, i.e., it is subadditive, positively homogeneous, translation invariant, and preserves constants.

Following \cite{CheFen}, a sequence of random variables $X_n:(\Omega,\mathcal A,P)\to\R$ will be called 
{\em negatively dependent} (with respect to $\E$) if, for any $n\in\N$ and any Borel functions $\varphi_i:\R\to [0, \infty)$,
\begin{equation}\label{eq;negDep}
\E[\varphi_1(X_1)\ldots\varphi_{n+1}(X_{n+1})]\le \E[\varphi_1(X_1)\ldots\varphi_{n}(X_{n})]\cdot \E[\varphi_{n+1}(X_{n+1})]
\end{equation}
whenever the upper expectations are finite\footnote{It is worth observing that negative dependence with respect to the expectation against a probability measure is just independence. Therefore,  this is not a genuine generalization of negative dependence of random variables in a probability space but a generalization of independence. Indeed, let $A_1,\ldots,A_{n+1}\in\mathcal A$ and also let $B_i\in\{A_i,A_i^c\}$ for each $i$. Taking $\varphi_i=I_{B_i}$ and applying negative dependence iteratively yields $P(X_1\in B_1,\ldots,X_{n+1}\in B_{n+1})\le \prod_{i=1}^{n+1}P(X_i\in B_i)$. But the sum of both sides over all possible choices of $B_i$ is $1$, which makes those inequalities identities and proves the independence of the $X_i$.}. 
If the inequalities in \eqref{eq;negDep} are identities, we say the $X_n$ are \emph{independent}.  
The $X_n$ will be called {\em identically distributed} (with respect to $\E$) if, for any $i,j\in\N$  and any Borel function $\varphi:\R\to [0, \infty)$,
$$\E[\varphi(X_i)]=\E[\varphi(X_j)]$$
whenever the upper expectations are finite.

Finally, we recall some notions from convex analysis. For a convex function $f\colon \R\to \mathbb{R}\cup\{\infty\}$, we define its \emph{effective domain} as
\[
{\rm dom}(f):=\{x\in\R\colon f(x)<\infty\}.
\]
The Fenchel--Legendre transform $f^\ast\colon \mathbb{R}\to\mathbb{R}\cup\{\infty\}$ of $f$ is defined by
\[
f^\ast(x)=\sup_{\lambda\in\R}(\lambda x - f(\lambda)).
\]
A point $y\in\R$ is said to be an \emph{exposed point} of $f^\ast$ if, for some $\lambda\in\mathbb{R}$ and all $x\neq y$,
\[
\lambda y - f^\ast(y)> \lambda x - f^\ast(x).
\]
In that case, we say that $\lambda$ is an exposed hyperplane of $x$.

\section{Counterexample}
\label{counter}

In this section, we present the announced counterexample. Let $(\Omega, \mathcal{A}, P)$ be the real line $\mathbb{R}$ equipped with its Borel $\sigma$-algebra and a probability measure $P$. Let $\mathbb{V}$ be  given by $\mathbb{V}(A) = P(A)(2 - P(A))$ for $A\in\mathcal{A}$. Since the function $x\in[0,1]\mapsto x(2-x)$ is increasing, it is easily checked that $\mathbb V$ is a capacity.

Additionally, we fix a sequence $\{X_n\}_{n \in\mathbb{N}}$ of $P$-i.i.d.~Bernoulli random variables with parameter $p \in (0,1)$. 
For each $n\in\N$,  define the sample mean $\overline{X}_n=\tfrac{1}{n}\sum_{i=1}^n X_i$.  
 For any $t\in(0,1)$, let $I_t$ denote the function
\[
I_t(x)
:=
\begin{cases}
x\ln\left(\frac{x}{t}\right)+(1-x)\ln\left(\frac{1-x}{1-t}\right), & \mbox{ if }x\in [0,1];\\
\infty,  & \mbox{ if }x\notin [0,1],
\end{cases}
\]

We have the following first result.

\begin{prop}\label{prop:LDP1}
For all closed $F\subset \mathbb{R}$ and open $G\subset\mathbb{R}$,
$$\limsup_n \frac{1}{n}\ln \mathbb V(\overline X_n\in F)\le -\inf_{x\in F}I_p(x)$$
and
$$\liminf_n \frac{1}{n}\ln \mathbb V(\overline X_n\in G)\ge -\inf_{x\in G}I_p(x).$$
\end{prop}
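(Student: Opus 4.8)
The plan is to reduce everything to the classical Cram\'er theorem by observing that $\mathbb V$ is squeezed between $P$ and $2P$. Writing $\mathbb V(A)=1-(1-P(A))^2$ makes transparent that, since $1\le 2-P(A)\le 2$ for every $A\in\mathcal A$,
\[
P(A)\le \mathbb V(A)\le 2\,P(A).
\]
A multiplicative constant is invisible to the exponential scaling $\tfrac1n\ln(\cdot)$, so the large deviation rate of $\mathbb V(\overline X_n\in\cdot)$ will coincide with that of $P(\overline X_n\in\cdot)$, which is the content of Cram\'er's theorem for the i.i.d.\ Bernoulli sequence.

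More precisely, for a closed set $F$ I would bound $\tfrac1n\ln\mathbb V(\overline X_n\in F)\le\tfrac{\ln 2}{n}+\tfrac1n\ln P(\overline X_n\in F)$ and let $n\to\infty$, the first summand vanishing; for an open set $G$ I would use $\tfrac1n\ln\mathbb V(\overline X_n\in G)\ge\tfrac1n\ln P(\overline X_n\in G)$ and pass to the $\liminf$. It then remains to recall that, for the $P$-i.i.d.\ Bernoulli($p$) sequence $\{X_n\}$, the cumulant generating function $\Lambda(\lambda)=\ln(1-p+p\,e^\lambda)$ is finite on all of $\R$, so Cram\'er's theorem (see, e.g., \cite{dembo}) yields the full large deviation principle for $\{\overline X_n\}$ under $P$ with good rate function $\Lambda^\ast$; an elementary computation of the supremum defining $\Lambda^\ast$ identifies $\Lambda^\ast=I_p$. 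Combining
\[
\limsup_n\tfrac1n\ln P(\overline X_n\in F)\le-\inf_{x\in F}I_p(x),\qquad
\liminf_n\tfrac1n\ln P(\overline X_n\in G)\ge-\inf_{x\in G}I_p(x)
\]
with the two displayed inequalities above gives the two assertions of the proposition.

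I expect no genuine obstacle in this argument; the only minor points are the degenerate situations --- $P(\overline X_n\in G)=0$ for some $n$, handled with the convention $\ln 0=-\infty$, or $G$ (resp.\ $F$) missing $[0,1]$, in which case the relevant infimum equals $+\infty$ --- and these are already absorbed in the statement of Cram\'er's theorem. The noteworthy feature, to be exploited in the rest of the section, is that this clean large deviation principle with rate $I_p$ holds for a capacity $\mathbb V$ that will be shown to satisfy the negative dependence hypothesis of \cite[Theorem 3.1]{CheFen}, while $I_p$ is \emph{not} the Fenchel--Legendre transform of the cumulant generating function associated with $\mathbb V$; this is precisely what refutes that theorem.
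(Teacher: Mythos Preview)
Your proof is correct and follows essentially the same approach as the paper: both reduce the $\mathbb V$-bounds to the $P$-bounds and then invoke Cram\'er's theorem for the Bernoulli$(p)$ sequence. The paper simply asserts the equalities $\limsup_n\tfrac1n\ln\mathbb V(\overline X_n\in F)=\limsup_n\tfrac1n\ln P(\overline X_n\in F)$ (and likewise for the $\liminf$), whereas you make the underlying sandwich $P\le\mathbb V\le 2P$ explicit; this is a cosmetic difference only.
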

\begin{proof}
For all closed $F\subset \mathbb{R}$ and open $G\subset\mathbb{R}$, 
$$\limsup_n \frac{1}{n}\ln \mathbb V(\overline X_n\in F)=\limsup_n \frac{1}{n}\ln P(\overline X_n\in F)$$
and
$$\liminf_n \frac{1}{n}\ln \mathbb V(\overline X_n\in G)=\liminf_n \frac{1}{n}\ln P(\overline X_n\in G).$$
Then, the conclusion follows from Cram\'{e}r's theorem \cite[Theorem 2.2.3]{dembo}, taking into account that $I_p$ is the rate function of a Bernoulli distribution with parameter $p$, see e.g.~\cite[Exercise 2.2.23]{dembo}.  
\end{proof}

For easier readability, the proof of the following lemma is postponed to the Appendix at the end of the paper.
\begin{lemma}\label{lem:hypothesis}
The capacity $\mathbb{V}$ is an upper probability and, moreover, the sequence  $\{X_n\}_n$, the upper probability  $\mathbb{V}$ and the upper expectation $\mathbb{E}$ associated to $\mathbb{V}$  satisfy the assumptions of \cite[Theorem 3.1]{CheFen}. 
Namely, 
\begin{itemize}  
\item[(i)] $\mathbb V$ is continuous.
\item[(ii)] The sequence $\{X_n\}_n$ is negatively dependent and identically distributed with respect to $\mathbb{E}$. 
\item[(iii)] $\E[e^{\del|X_i|}]<\infty$ for all $\del>0$ and  $i\in\N.$
\item[(iv)]The mapping $\Lambda:\lambda\in\R\to[-\infty,\infty]$ given by
$$\Lambda(\lambda)=\lim_n \frac{1}{n}\sum_{i=1}^n\ln\E[e^{\lambda X_i}]$$
\noindent is well defined.
\item[(iv)]$0\in {\rm int}(\dom(\Lambda))$.
\end{itemize}
\end{lemma}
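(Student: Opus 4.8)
The plan is to isolate two structural facts about $\mathbb{V}$ that make the verification of (i)--(iv) essentially routine, and then run through the five items. First I would record that $\mathbb{V}=f\circ P$ with $f(x)=2x-x^{2}$, which is increasing and \emph{concave} on $[0,1]$ with $f(0)=0$, $f(1)=1$; combined with the modularity identity $P(A\cup B)+P(A\cap B)=P(A)+P(B)$, concavity of $f$ yields that $\mathbb{V}$ is submodular (2-alternating). By the classical theory of submodular capacities, such a continuous capacity is an upper probability with representing class $\mathcal{P}:=\{Q:Q\le\mathbb{V}\}=\core(\overline{\mathbb{V}})\neq\emptyset$, and its associated upper expectation is the Choquet integral, $\mathbb{E}[W]=\sup_{Q\in\mathcal{P}}E_{Q}[W]=E_{\mathbb{V}}[W]$ for bounded $W$. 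Since each $X_{i}$ is $P$-a.s.\ $\{0,1\}$-valued, every random variable occurring below ($\varphi_{i}(X_{i})$, finite products of such, $e^{\lambda X_{i}}$, $e^{\del|X_{i}|}$) is bounded, so all upper expectations in play are finite and given by a Choquet integral. The second fact is the \emph{two-point representation}: for bounded $W\ge0$,
\[
\mathbb{E}[W]=\int_{\Omega^{2}}\max\{W(\omega),W(\omega')\}\,\dd(P\otimes P)(\omega,\omega'),
\]
which follows from $E_{\mathbb{V}}[W]=\int_{0}^{\infty}\mathbb{V}(W\ge t)\,\dd t$ and the identity $\mathbb{V}(W\ge t)=f(P(W\ge t))=1-P(W<t)^{2}=(P\otimes P)(\max\{W(\omega),W(\omega')\}\ge t)$. (Equivalently, $\overline{\mathbb{V}}=P(\cdot)^{2}$ is the belief function of the two-point random set $\{\omega,\omega'\}$ on $(\Omega^{2},P\otimes P)$, which is what renders $\mathbb{V}$ an upper probability in the first place.)

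With these in hand the verifications are short. For (i): if $A_{n}\nearrow A$ or $A_{n}\searrow A$ then $P(A_{n})\to P(A)$ and $f$ is continuous, so $\mathbb{V}(A_{n})\to\mathbb{V}(A)$. For the \emph{identically distributed} half of (ii): if $\varphi\ge0$ is Borel then $\varphi(X_{i})\overset{d}{=}\varphi(X_{j})$ under $P$, hence $\mathbb{V}(\varphi(X_{i})\ge t)=f(P(\varphi(X_{i})\ge t))$ does not depend on $i$, and neither does $\mathbb{E}[\varphi(X_{i})]=\int_{0}^{\infty}\mathbb{V}(\varphi(X_{i})\ge t)\,\dd t$. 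For \emph{negative dependence}: given Borel $\varphi_{1},\dots,\varphi_{n+1}\colon\R\to[0,\infty)$, set $Y=\prod_{i=1}^{n}\varphi_{i}(X_{i})$ and $Z=\varphi_{n+1}(X_{n+1})$, which are $P$-independent and nonnegative. Using the pointwise bound $\max\{y_{1}z_{1},y_{2}z_{2}\}\le\max\{y_{1},y_{2}\}\max\{z_{1},z_{2}\}$ on $[0,\infty)$ together with the two-point representation,
\[
\mathbb{E}[YZ]\le\int_{\Omega^{2}}\max\{Y(\omega),Y(\omega')\}\,\max\{Z(\omega),Z(\omega')\}\,\dd(P\otimes P);
\]
but under $P\otimes P$ the law of $(Y(\omega),Z(\omega),Y(\omega'),Z(\omega'))$ is the product $\mathrm{Law}_{P}(Y)^{\otimes 2}\otimes\mathrm{Law}_{P}(Z)^{\otimes 2}$ (by $P$-independence of $Y,Z$ and independence of $\omega,\omega'$), so $\max\{Y(\omega),Y(\omega')\}$ and $\max\{Z(\omega),Z(\omega')\}$ are independent and the last integral factors into $\mathbb{E}[Y]\cdot\mathbb{E}[Z]$, again by the two-point representation — which is exactly \eqref{eq;negDep}. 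For (iii): $e^{\del|X_{i}|}\le e^{\del}$ pointwise, so $\mathbb{E}[e^{\del|X_{i}|}]\le e^{\del}<\infty$. For (iv): applying the identically-distributed property to $\varphi(x)=e^{\lambda x}$ gives $\mathbb{E}[e^{\lambda X_{i}}]=\mathbb{E}[e^{\lambda X_{1}}]=:m(\lambda)$ for all $i$, so the averages defining $\Lambda$ are constant in $n$ and $\Lambda(\lambda)=\ln m(\lambda)$ is well defined; a direct Choquet computation (using $\mathbb{V}(X_{1}=1)=f(p)$ and $\mathbb{V}(X_{1}=0)=f(1-p)$) gives $m(\lambda)=1+f(p)(e^{\lambda}-1)$ for $\lambda\ge0$ and $m(\lambda)=e^{\lambda}+f(1-p)(1-e^{\lambda})$ for $\lambda<0$, which lies in $(0,\infty)$ for every $\lambda$; hence $\Lambda$ is real-valued on all of $\R$ and $0\in\R=\interior(\dom(\Lambda))$.

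I expect the only genuinely non-mechanical points to be the two in the first paragraph — identifying the upper expectation attached to $\mathbb{V}$ with the Choquet integral (which I would draw from the classical submodular/2-alternating theory rather than establish by hand) and the two-point representation — and, resting on the latter, the negative-dependence argument, whose crux is the observation that the two ``sample maxima'' $\max\{Y(\omega),Y(\omega')\}$ and $\max\{Z(\omega),Z(\omega')\}$ are genuinely independent under $P\otimes P$. That step is where I anticipate the main obstacle; once the representation is in place, the rest is bookkeeping made painless by the fact that all variables involved are bounded.
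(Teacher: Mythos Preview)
Your proposal is correct and follows essentially the same route as the paper: identify $\mathbb{E}$ with the Choquet integral against $\mathbb{V}$, establish the ``two-point'' representation $\mathbb{E}[W]=E[\max\{W,W'\}]$ for independent copies, and deduce negative dependence from the pointwise bound $\max\{y_1z_1,y_2z_2\}\le\max\{y_1,y_2\}\max\{z_1,z_2\}$ together with independence of the two sample maxima. The only differences are cosmetic: the paper obtains the Choquet/upper-probability structure by showing the dual $\overline{\mathbb{V}}=P^2$ is completely monotone and citing \cite{TAMS}, whereas you argue via submodularity of $\mathbb{V}=f\circ P$ (and, in parentheses, the random-set interpretation of $P^2$); and the paper realizes the independent copies by borrowing further terms $X_{n+2},\dots,X_{2n+2}$ from the ambient i.i.d.\ sequence, whereas you work on the product space $(\Omega^2,P\otimes P)$ --- both choices lead to the same factorization. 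Your explicit Choquet computation of $m(\lambda)$ agrees with the paper's formula for $\Lambda$.
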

As in \cite{CheFen}, we consider the set ${\mathcal{E}}(\Lambda^\ast)$  of all exposed points of $\Lambda^\ast$ which admit an exposing hyperplane $\lambda$ such that $\lambda\in{\rm int}({\rm dom}(\Lambda))$.
 In addition, in the remainder of this section, we consider the function
\[
I(x)
:=
\begin{cases}
I_{p^2}(x), & \mbox{ if }x< p^2;\\
0,  & \mbox{ if }x\in [p^2,p(2-p)];\\
I_{p(2-p)}(x), & \mbox{ if }x> p(2-p),
\end{cases}
\]
and the set $H=(0,p^2)\cup(p^2,p(2-p))$.
%Let $\overline{\Lambda}^\ast$ the Fenchel-Legendre transform of $\overline{\Lambda}$, that is,
%$$\overline{\Lambda}^\ast(x):=\sup_{\lambda\in\mathbb{R}}(\lambda x -\overline{\Lambda}(\lambda)).
%$$
%Denote by ${\rm Ex}(\Lambda)$

We refer to the Appendix for the proof of the following lemma. 
\begin{lemma}\label{lem:conjugate} 
The following conditions hold true:
\begin{itemize}
\item[(i)] $\Lambda^\ast(x)=I(x)$ for all $x\in\mathbb{R}$.
\item[(ii)] $
\mathcal{E}(\Lambda^\ast)=H$.
\end{itemize}
\end{lemma}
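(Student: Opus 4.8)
The plan is to first compute $\Lambda$ explicitly and then Fenchel-dualize. For part (i), I would start from the definition $\Lambda(\lambda)=\lim_n \frac1n\sum_{i=1}^n\ln\E[e^{\lambda X_i}]$. Since the $X_i$ are identically distributed with respect to $\E$ (by Lemma \ref{lem:hypothesis}), this reduces to $\Lambda(\lambda)=\ln\E[e^{\lambda X_1}]$. Because $X_1$ is Bernoulli($p$) under $P$ and $\mathbb V(A)=P(A)(2-P(A))$, the upper expectation $\E[e^{\lambda X_1}]$ is the Choquet integral of $e^{\lambda X_1}$ against $\mathbb V$; as $e^{\lambda X_1}$ takes only the two values $1$ (on $\{X_1=0\}$) and $e^\lambda$ (on $\{X_1=1\}$), the Choquet integral is a simple two-term expression. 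For $\lambda>0$ the larger value is $e^\lambda$, carried on the set of $\mathbb V$-measure $\mathbb V(X_1=1)=p(2-p)$, so $\E[e^{\lambda X_1}]=1+(e^\lambda-1)p(2-p)$; for $\lambda<0$ the larger value is $1$, carried on $\{X_1=0\}$ of $\mathbb V$-measure $(1-p)(2-(1-p))=1-p^2$, so $\E[e^{\lambda X_1}]=e^\lambda+(1-e^\lambda)(1-p^2)=1+(e^\lambda-1)p^2$ (and $\Lambda(0)=0$ either way). Thus $\Lambda(\lambda)=\ln(1+(e^\lambda-1)q)$ with $q=p^2$ for $\lambda\le 0$ and $q=p(2-p)$ for $\lambda\ge 0$. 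This is exactly the logarithmic moment generating function of a Bernoulli($q$) variable on the corresponding half-line, so its Fenchel transform restricted to each half-line is the corresponding Bernoulli rate function $I_q$; assembling the two pieces and checking continuity/agreement on the overlap $[p^2,p(2-p)]$ (where $\Lambda^\ast=0$ since $\Lambda'(0^-)=p^2$, $\Lambda'(0^+)=p(2-p)$) gives $\Lambda^\ast=I$. I would verify the patching carefully using that $I_{p^2}$ is increasing on $[0,1]$ to the right of its minimizer $p^2$ and decreasing to the left, and symmetrically for $I_{p(2-p)}$, so that the formula for $I$ is genuinely the supremum defining $\Lambda^\ast$ at every $x$.

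For part (ii), I would use the standard characterization that exposed points of the convex conjugate are exactly the points $x$ where $\Lambda^\ast$ is differentiable with the subgradient lying in the interior of $\dom(\Lambda)$, equivalently $x=\Lambda'(\lambda)$ for some $\lambda$ in the interior of $\dom(\Lambda)$ at which $\Lambda$ is strictly convex (so that the exposing hyperplane is unique). Here $\Lambda$ is finite and smooth on all of $\R$, so $\dom(\Lambda)=\R$ and every $\lambda\in\R$ is interior. On $(-\infty,0)$, $\Lambda$ is strictly convex and $\Lambda'$ maps $(-\infty,0)$ onto $(0,p^2)$; on $(0,\infty)$, $\Lambda$ is strictly convex and $\Lambda'$ maps $(0,\infty)$ onto $(p(2-p),1)$. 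Wait — this produces exposed points in $(0,p^2)$ and in $(p(2-p),1)$, not $(p^2,p(2-p))$; so I need to reconcile with the claimed $H=(0,p^2)\cup(p^2,p(2-p))$. The resolution is that $\Lambda$ is a patched function: one must Fenchel-transform $\Lambda$ correctly, and the points $x\in(p^2,p(2-p))$ are exposed because $\Lambda^\ast=I$ is strictly convex there (it equals $I_{p(2-p)}$ restricted appropriately after the patch, or more precisely the conjugate structure forces strict convexity of $I$ on that interval), with exposing slope $0\in\interior(\dom(\Lambda))=\R$, while $x=p^2$ is a kink of $\Lambda^\ast$ (the left derivative of $I$ at $p^2$ from the $I_{p^2}$ branch is $0$ but the branch changes), hence not exposed, and $x\in(0,p^2)$ is exposed with a negative slope. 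I would therefore argue directly: determine on which open intervals $I$ is strictly convex and differentiable with derivative $\lambda$ such that $\lambda\in\R$ (automatic) and such that $x$ is the \emph{unique} maximizer in $\lambda x-\Lambda^\ast(x)$; show this holds precisely on $(0,p^2)$ and on $(p^2,p(2-p))$, fails at $x=p^2$ (kink of $I$), fails on $[p(2-p),\infty)$ partly because $I\equiv 0$ on $[p^2,p(2-p)]$ so no point there except via strict convexity to the right — and here I would check that on $(p(2-p),1)$, $I=I_{p(2-p)}$ is strictly convex, but whether these are exposed with an \emph{interior} exposing hyperplane: the exposing slope at such $x$ is $I'(x)=\Lambda$-inverse which lies in $(0,\infty)\subset\interior\dom\Lambda$, so they \emph{would} be exposed — meaning I must double-check the claimed $H$ against the possibility that the intended $\Lambda$ here is not smooth on all of $\R$.

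Re-examining: the subtlety, and the \textbf{main obstacle}, is that $\Lambda$ as defined by the patched Bernoulli-$q$ formula is \emph{not} the honest $\frac1n\sum\ln\E[e^{\lambda X_i}]$ unless that limit genuinely equals the patched expression — which it does — but then $\dom(\Lambda)=\R$ and the exposed-point set is determined by where $\Lambda^\ast$ is strictly convex, and I must pin down $\Lambda^\ast=I$ exactly and compute $\{x: I\text{ strictly convex and smooth at }x,\ I'(x)\in\interior\dom\Lambda\}$. Since $I$ is affine (identically $0$) on $[p^2,p(2-p)]$, no interior point of that segment is exposed; since $I$ has a corner at $x=p^2$ (left branch $I_{p^2}$ has derivative $\to 0$, but the effective $\Lambda^\ast$ near $p^2$ from the left is governed by the $\lambda\le 0$ branch of $\Lambda$ whose derivative at $0$ is $p^2$, giving a well-defined tangent) — so the resolution hinges on the precise one-sided derivatives of $I$ at $p^2$ and $p(2-p)$. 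I expect: $I$ is strictly convex on $(0,p^2)$ (exposed, slopes negative), has a corner at $p^2$ (NOT exposed: two supporting slopes), is affine on $(p^2,p(2-p))$ — wait, that contradicts $(p^2,p(2-p))\subset H$. The correct picture must be that $I$ is \emph{not} identically zero but the formula $I=0$ on $[p^2,p(2-p)]$ is as stated, so $(p^2,p(2-p))$ being in $H$ forces these to be exposed via the exposing hyperplane $\lambda=0$: indeed for $x\in(p^2,p(2-p))$, $0\cdot x - I(x)=0>0\cdot y-I(y)$ for all $y\ne x$ iff $I(y)>0$ for $y\ne x$ with $y\in[p^2,p(2-p)]$ — but $I(y)=0$ there, so strict inequality fails. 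Hence the only consistent reading is that I have the branches of $\Lambda$ (equivalently of $I$) swapped, and in fact $\Lambda^\ast(x)=I_{p^2}(x)$ for $x>$ something and $I_{p(2-p)}(x)$ for $x<$ something; I would recompute the Choquet integral signs with care, locate the true flat piece, and then $H$ comes out as the union of the two open intervals strictly inside $(0,1)$ on which $\Lambda^\ast$ is strictly convex and differentiable with derivative in $\interior\dom\Lambda$, separated by a single kink point which is exactly the value separating the two Bernoulli parameters. The routine parts are the Choquet computation and the convex-analysis identities $(\ln(1+(e^\lambda-1)q))^\ast=I_q$; the delicate part, which I will treat explicitly, is the behaviour at the junction point and the verification that it is the \emph{only} non-exposed point in $(0,1)\setminus\{\text{flat piece}\}$, giving $\mathcal E(\Lambda^\ast)=H$.
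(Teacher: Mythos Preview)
Your plan for part (i) is correct and essentially the same as the paper's. The only cosmetic difference is that the paper computes $\E[e^{\lambda X_1}]$ via the identity $\E[Y]=E_P[\max\{Z,Z'\}]$ (Lemma~\ref{max}) rather than directly from the Choquet integral of a two-valued function; both routes give $\Lambda(\lambda)=\Lambda_{p^2}(\lambda)$ for $\lambda\le 0$ and $\Lambda(\lambda)=\Lambda_{p(2-p)}(\lambda)$ for $\lambda\ge 0$, and the paper then dualizes piecewise exactly as you outline, using \eqref{eq:FenchelBern} and the monotonicity of $f_t(\lambda)=x\lambda-\Lambda_t(\lambda)$ on the appropriate half-line to conclude $\Lambda^\ast=I$ with the flat piece on $[p^2,p(2-p)]$.

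For part (ii), your \emph{first} computation is the right one, and the rest of your proposal is confusion induced by a typo in the statement. The paper writes $H=(0,p^2)\cup(p^2,p(2-p))$ in the body, but its own proof in the Appendix opens with ``We need to prove that $\mathcal{E}(\Lambda^\ast)=(0,p^2)\cup\bigl(p(2-p),1\bigr)$'' and then proves exactly that. Your argument that the flat segment $[p^2,p(2-p)]$ contains no exposed points (because for any candidate $\lambda$ the strict inequality fails at every other point of the segment) is correct, and your identification of the exposed set with the image of $\Lambda'$ on the two open half-lines, namely $(0,p^2)$ for $\lambda<0$ and $(p(2-p),1)$ for $\lambda>0$, is exactly what the paper establishes by direct analysis of $g_\lambda(x)=\lambda x-I(x)$. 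There is nothing to reconcile: trust your computation, discard the attempted ``resolutions,'' and note that the typo is harmless for the downstream Corollary~\ref{ggg}, since there one only uses $(a,b)\cap H=(a,b)$ for $0<a<b<p^2$, which holds under either reading of $H$.
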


After Lemmas \ref{lem:hypothesis} and \ref{lem:conjugate}, the following result is immediate.

\begin{prop}\label{prop:LDP2}
If \cite[Theorem 3.1]{CheFen} is correct then, for all closed $F\subset \mathbb{R}$ and open $G\subset \mathbb{R}$,
$$\limsup_n \frac{1}{n}\ln \mathbb V(\overline X_n\in F)\le -\inf_{x\in F}I(x)$$
and
$$\liminf_n \frac{1}{n}\ln \mathbb V(\overline X_n\in G)\ge -\inf_{x\in G\cap H}I(x).$$
\end{prop}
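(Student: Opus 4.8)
The plan is to obtain Proposition \ref{prop:LDP2} as a direct consequence of the two preceding lemmas together with the statement of \cite[Theorem 3.1]{CheFen}. First I would recall the precise form of the conclusion of that theorem: under its hypotheses it asserts a G\"artner--Ellis type large deviation principle, namely that for every closed $F\subset\R$
\[
\limsup_n \tfrac{1}{n}\ln \mathbb V(\overline X_n\in F)\le -\inf_{x\in F}\Lambda^\ast(x),
\]
and for every open $G\subset\R$
\[
\liminf_n \tfrac{1}{n}\ln \mathbb V(\overline X_n\in G)\ge -\inf_{x\in G\cap\mathcal E(\Lambda^\ast)}\Lambda^\ast(x),
\]
where $\Lambda$ and the set $\mathcal E(\Lambda^\ast)$ of exposed points with exposing hyperplane interior to $\dom(\Lambda)$ are exactly the objects introduced right before Lemma \ref{lem:conjugate}.

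Second, I would invoke Lemma \ref{lem:hypothesis}: it certifies that the sequence $\{X_n\}_n$, the upper probability $\mathbb V$ and the upper expectation $\mathbb E$ satisfy items (i)--(iv) listed there, and these are precisely the standing assumptions of \cite[Theorem 3.1]{CheFen}. Consequently, \emph{if} that theorem is correct, the two displayed bounds above hold in our concrete setting, with no further verification needed.

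Third, I would substitute the computations of Lemma \ref{lem:conjugate}. Part (i) gives $\Lambda^\ast(x)=I(x)$ for every $x\in\R$, and part (ii) gives $\mathcal E(\Lambda^\ast)=H$. Plugging these two identities into the upper and lower bounds from the previous step yields exactly
\[
\limsup_n \tfrac{1}{n}\ln \mathbb V(\overline X_n\in F)\le -\inf_{x\in F}I(x),\qquad
\liminf_n \tfrac{1}{n}\ln \mathbb V(\overline X_n\in G)\ge -\inf_{x\in G\cap H}I(x),
\]
which is the assertion of the proposition, completing the argument.

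I do not expect a genuine mathematical obstacle here: the proof is pure bookkeeping, and all the real content has been pushed into Lemma \ref{lem:hypothesis} (checking that the non-additive setup falls under Chen and Feng's framework, including that $\mathbb V$ is an upper probability and that the induced sequence is negatively dependent and identically distributed for $\mathbb E$) and Lemma \ref{lem:conjugate} (identifying $\Lambda^\ast$ with the truncated Bernoulli rate function $I$ and its exposed-point set with $H$), both deferred to the appendix. The one point demanding care is quoting \cite[Theorem 3.1]{CheFen} accurately, in particular that the lower bound is an infimum over $G\cap\mathcal E(\Lambda^\ast)$ rather than over $G$; this restriction to exposed points is exactly what makes the subsequent counterexample possible, since on the gap $(p^2,p(2-p))\subset H$ one has $I\equiv 0$ while $\mathbb V(\overline X_n\in G)$ will be shown not to decay exponentially, and the mismatch only surfaces once $H$ is correctly inserted into the bound.
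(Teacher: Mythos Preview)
Your proof is correct and is precisely the paper's approach: the paper states the proposition as ``immediate'' from Lemmas \ref{lem:hypothesis} and \ref{lem:conjugate}, and your three steps spell out exactly that substitution of $\Lambda^\ast=I$ and $\mathcal E(\Lambda^\ast)=H$ into the conclusion of \cite[Theorem 3.1]{CheFen}. One tangential remark: your closing speculation about where the contradiction will surface is off---the paper's counterexample (Corollary \ref{ggg}) works on intervals $(a,b)\subset(0,p^2)$, exploiting $I=I_{p^2}\neq I_p$ there, not on the plateau $(p^2,p(2-p))$---but this does not affect the proof of the present proposition.
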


Finally, as a consequence of Proposition \ref{prop:LDP2} and Proposition \ref{prop:LDP1}, we obtain
\begin{corollary}\label{fin}\label{ggg}
Theorem 3.1 in \cite{CheFen} is not correct.
\end{corollary}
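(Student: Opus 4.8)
The plan is to derive a contradiction by comparing the two large deviation statements now available for the same sequence. On one hand, Proposition~\ref{prop:LDP1} gives the genuine asymptotics of $\mathbb V(\overline X_n\in\cdot)$, governed by the Bernoulli rate function $I_p$. On the other hand, Proposition~\ref{prop:LDP2} records what the lower bound in \cite[Theorem~3.1]{CheFen} would force, namely a lower bound governed by $I$ restricted to the set $H=(0,p^2)\cup(p^2,p(2-p))$. Since $p\in(0,1)$ we have $p^2<p<p(2-p)$, so $p$ itself lies strictly between $p^2$ and $p(2-p)$; in particular $p\notin H$ and the interval $(p^2,p(2-p))$ is nonempty.

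First I would fix an open interval $G$ that witnesses the clash. A natural choice is a small open interval $G=(p-\eta,p+\eta)$ around $p$ with $\eta$ small enough that $G\subset(0,p^2)^c\cap(0,1)$ is false --- rather, small enough that $\overline G\cap H$ misses a neighbourhood of $p$; concretely take $\eta$ so small that $p-\eta>p^2$ and $p+\eta<p(2-p)$, so that $G\subset[p^2,p(2-p)]$ and hence $G\cap H=\emptyset$ except possibly for the point $p^2$, which we also exclude by shrinking. Then the right-hand side of the second inequality in Proposition~\ref{prop:LDP2} is $-\inf_{x\in G\cap H}I(x)=-\inf_{x\in\emptyset}I(x)=-\infty$, so that proposition only asserts $\liminf_n\frac1n\ln\mathbb V(\overline X_n\in G)\ge-\infty$, which is vacuous. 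That is the wrong interval.

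So instead I would pick $G$ to straddle the boundary: choose $G=(p^2-\eta,p^2+\eta)$ for small $\eta>0$. Then $G\cap H\supset(p^2-\eta,p^2)$ is nonempty, and $\inf_{x\in G\cap H}I(x)=\inf_{x\in(p^2-\eta,p^2)}I_{p^2}(x)$, which tends to $I_{p^2}(p^2)=0$ as $\eta\to0$ by continuity of $I_{p^2}$ on $[0,1]$; more importantly, for each fixed $\eta$ this infimum is some finite value $c(\eta)\ge 0$ with $c(\eta)\to 0$. Thus Proposition~\ref{prop:LDP2} would give $\liminf_n\frac1n\ln\mathbb V(\overline X_n\in G)\ge-c(\eta)$, and letting $\eta\to 0$, $\liminf_n\frac1n\ln\mathbb V(\overline X_n\in G_\eta)\ge -c(\eta)\to 0$. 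Meanwhile Proposition~\ref{prop:LDP1} applied to the same open set $G_\eta$, or more simply its closure, forces $\limsup_n\frac1n\ln\mathbb V(\overline X_n\in G_\eta)\le-\inf_{x\in \overline{G_\eta}}I_p(x)$, and since $\overline{G_\eta}\subset[p^2-\eta,p^2+\eta]$ stays uniformly bounded away from $p$ for small $\eta$, this upper bound is $\le-\delta<0$ for a fixed $\delta>0$ independent of $\eta$. Choosing $\eta$ small enough that $c(\eta)<\delta$ yields
\[
-\delta\ \ge\ \limsup_n \tfrac1n\ln\mathbb V(\overline X_n\in G_\eta)\ \ge\ \liminf_n \tfrac1n\ln\mathbb V(\overline X_n\in G_\eta)\ \ge\ -c(\eta)\ >\ -\delta,
\]
a contradiction. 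Hence \cite[Theorem~3.1]{CheFen} cannot be correct.

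The main obstacle is purely bookkeeping: one must verify that the hypotheses of \cite[Theorem~3.1]{CheFen} are genuinely met by the example (this is exactly the content of Lemma~\ref{lem:hypothesis}) and that $\Lambda^\ast$ and $\mathcal E(\Lambda^\ast)$ are as claimed (Lemma~\ref{lem:conjugate}), both of which we are entitled to assume here; the remaining work is just to confirm the numerical ordering $p^2<p<p(2-p)$ for $p\in(0,1)$, the continuity of $I_{p^2}$ at $p^2$ giving $c(\eta)\to 0$, and the strict separation $\inf_{[p^2-\eta,p^2+\eta]}I_p>0$ for small $\eta$ since $I_p$ vanishes only at $p$. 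Given Propositions~\ref{prop:LDP1} and~\ref{prop:LDP2}, the corollary then follows immediately.
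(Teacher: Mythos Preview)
Your argument is correct, and in fact slightly more direct than the paper's. One small refinement: with your choice $G_\eta=(p^2-\eta,p^2+\eta)$, the infimum $c(\eta)=\inf_{x\in G_\eta\cap H}I(x)$ is actually equal to $0$ for every $\eta>0$, not merely tending to $0$. Indeed, on $(p^2-\eta,p^2)$ we have $I=I_{p^2}$, which is continuous with $I_{p^2}(p^2)=0$, so the open-interval infimum is already $0$; and if one uses the set $H$ as written in the paper, the piece $(p^2,p^2+\eta)\subset H$ contributes $I\equiv 0$ directly. Either way $c(\eta)=0$, so your ``choose $\eta$ with $c(\eta)<\delta$'' step is automatic and the contradiction $-\delta\ge\limsup\ge\liminf\ge 0$ follows immediately.

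The paper's proof instead works with an interval $(a,b)$ entirely inside $(0,p^2)$. It squeezes the limit from both sides using Proposition~\ref{prop:LDP2} to obtain $\lim_n\tfrac1n\ln\mathbb V(\overline X_n\in(a,b))=-I_{p^2}(b)$, then repeats the squeeze with Proposition~\ref{prop:LDP1} to obtain the same limit equal to $-I_p(b)$, forcing $I_{p^2}\equiv I_p$ on $(0,p^2)$, which is false. Your route is leaner: you combine the (putative) Chen--Feng lower bound directly with the genuine Cram\'er upper bound on a single interval straddling $p^2$, avoiding the double squeeze. The paper's version is a bit more informative in that it pinpoints the precise identity $I_{p^2}=I_p$ that would be forced, but both arguments exploit the same mechanism---that the alleged rate function $I$ vanishes near $p^2$ whereas the true rate function $I_p$ does not.
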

\begin{proof}
By contradiction, assume  that \cite[Theorem 3.1]{CheFen} holds true. Fix $0 < a < b < p^2$. By the lower bound in Proposition \ref{prop:LDP2}, we have
\begin{align*}
-\inf_{x \in (a,b) \cap H} I(x) &= -\inf_{x \in (a,b)} I_{p^2}(x) \\
                                 &= -I_{p^2}(b) \leq \liminf_{n \to \infty} \frac{1}{n} \ln \mathbb{V}(\overline{X}_n \in (a,b)),
\end{align*}
where we have used the fact that $I_{p^2}$ is decreasing on $[0, p^2]$.

On the other hand, by the upper bound in Proposition \ref{prop:LDP2},
\begin{align*}
\limsup_{n \to \infty} \frac{1}{n} \ln \mathbb{V}(\overline{X}_n \in (a,b)) &\leq \limsup_{n \to \infty} \frac{1}{n} \ln \mathbb{V}(\overline{X}_n \in [a,b]) \\
-\inf_{x \in [a,b]} I_{p^2}(x) &= -I_{p^2}(b).
\end{align*}
Combining both inequalities yields
\[
\lim_{n \to \infty} \frac{1}{n} \ln \mathbb{V}(\overline{X}_n \in (a,b)) = -I_{p^2}(b).
\]

Now a similar argument, using the bounds provided in Proposition~\ref{prop:LDP1}, gives
\[
\lim_{n \to \infty} \frac{1}{n} \ln \mathbb{V}(\overline{X}_n \in (a,b)) = -I_{p}(b).
\]

Since \(b\) was arbitrary, we conclude that \(I_{p^2}(b) = I_{p}(b)\) for all \(b \in (0, p^2)\). However, this contradicts the trivial fact that \(I_{p^2} \neq I_{p}\) on \((0, p^2)\) (see Figure~\ref{pic}).  
By {\it reductio ad absurdum}, we obtain the desired conclusion.
\end{proof}

\begin{figure}[htb]\label{pic}
\centering
\includegraphics[width=10cm]{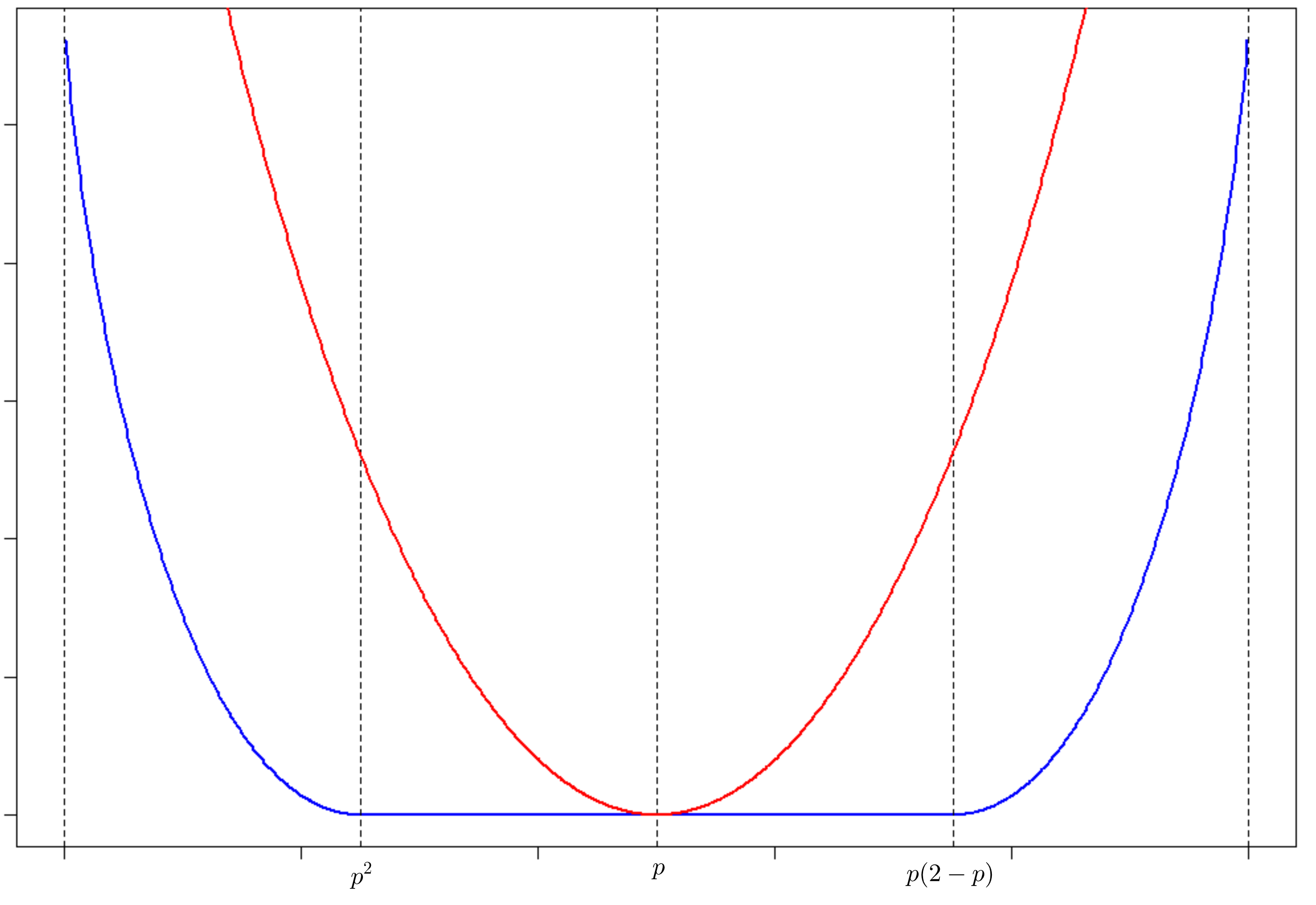}
\caption{The function $I_p$ is plotted in red, and the function $I$ is plotted in blue.}
\end{figure}

\begin{remark}\label{rem:bounds}
Given a closed set $F \subset \R$, applying the upper bound from Proposition \ref{prop:LDP1}, we obtain
$$
\limsup_{n \to \infty} \frac{1}{n} \ln \mathbb{V}(\overline{X}_n \in F) \leq -\inf_{x \in F} I_p(x) \leq -\inf_{x \in F} I(x),
$$
where the second inequality follows from the fact that $I \leq I_p$. Thus, our example satisfies the upper bound in \cite[Theorem 3.1]{CheFen}. 
However, by Corollary \ref{fin}, the lower bound in \cite[Theorem 3.1]{CheFen} must fail.  In fact, what the proof of Corollary \ref{fin} shows is that the lower bound in \cite[Theorem 3.1]{CheFen} leads to a contradiction with the upper bound in Cram\'{e}r's theorem.
\end{remark}

\section{Large deviation principle for negatively dependent random variables}
\label{sec:GartnerEllis}

In this section, we provide an alternative version of the large deviation principle stated in \cite[Theorem 3.1]{CheFen}, which we have shown to be incorrect in the previous section. We rely on a general version of the G\"{a}rtner-Ellis theorem, recently provided in \cite[Theorem 5.4]{zapata}, which establishes large deviation bounds for general capacities on regular topological spaces.

Let $(\Omega, \mathcal{A}, P)$ be a probability space. We fix an upper probability 
\[
\mathbb{V}(A) := \sup_{P \in \mathcal{P}} P(A) \quad \text{for } A \in \mathcal{A},
\]
and denote by $\mathbb{E}$ the associated upper expectation.  
Additionally, consider a sequence $\{X_n\}_{n\in\N}$ of random variables on $(\Omega, \mathcal{A})$, and for each $n \in \mathbb{N}$, define the corresponding sample mean $\overline{X}_n := \frac{1}{n} \sum_{i=1}^n X_i$.  
Note that we do not impose (for now) any independence assumptions on $\{X_n\}_{n\in\N}$.

The notion of an exponentially tight sequence is widely used in large deviation theory. It extends naturally to capacities as follows.

\begin{definition}
The sequence of random variables $\{\overline{X}_n\}_{n\in\N}$ is said to be \emph{exponentially tight} if, for every $N \in \mathbb{N}$, there exists a compact set $K \subset \mathbb{R}$ such that
\[
\limsup_{n \to \infty} \frac{1}{n} \ln \mathbb{V}(\overline{X}_n \in K^c) \leq -N.
\]
\end{definition}

In the following, we define the function ${\Gamma}\colon \mathbb{R}\to \mathbb{R}\cup\{\infty\}$ by 
\[
{\Gamma}(\lambda):=\limsup_{n\to\infty}\frac{1}{n}\ln\mathbb{E}[e^{n\lambda \overline{X}_n}].
\]
We define $\tilde{\mathcal{E}}(\Gamma^\ast)$ as the set of all exposed points $x$ of $\Gamma^\ast$ for which there exists an exposing hyperplane $\lambda$ such that the limit superior in $\Gamma(\lambda)$ is actually a limit, and  $\Gamma(t\lambda) < \infty$ for some $t > 1$.

%We notice that $\tilde{{\rm Ex}}({\Gamma}^\ast)\subset {{\rm Ex}}({\Gamma}^\ast)$.

 As a consequence of \cite[Theorem 5.4]{zapata}, we obtain the following result.\footnote{In order to apply  \cite[Theorem 5.4]{zapata}, we consider $\mathcal{E}_n(f):=\mathbb{E}[e^{n f(\overline{X}_n)}]$, $\mu_n(A):=\mathbb{V}(X_n\in A)$, and $\mathcal{H}:=\{f_\lambda\colon \lambda\in\mathbb{R}\}$ where $f_\lambda(x)=\lambda x$  for $x\in \mathbb{R}$. }

\begin{theorem}\label{thm:gartnerEllis}
Suppose that the sequence $\{\overline{X_n}\}_{n\in\mathbb{N}}
$ is exponentially tight. Then, for all closed $F\subset\mathbb{R}$ and open $G\subset \R$, 
\[
\limsup_{n\to\infty}\frac{1}{n}\ln\mathbb{V}(\overline{X}_n\in F)\le -\inf_{x\in F}{\Gamma}^\ast(x)
\]
and
\[
\liminf_{n\to\infty}\frac{1}{n}\ln\mathbb{V}(\overline{X}_n\in G)\ge -\inf_{x\in G\cap \tilde{\mathcal{E}}({\Gamma}^\ast)}{\Gamma}^\ast(x).
\]
\end{theorem}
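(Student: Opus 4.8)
The plan is to derive Theorem \ref{thm:gartnerEllis} as a direct specialization of the abstract G\"artner--Ellis theorem \cite[Theorem 5.4]{zapata}, following the dictionary indicated in the footnote. First I would set $\mu_n(A):=\mathbb{V}(\overline{X}_n\in A)$ for $A\in\mathcal{B}_\R$, take the regular topological space to be $\R$ with its usual topology, let $\mathcal{H}:=\{f_\lambda:\lambda\in\R\}$ with $f_\lambda(x)=\lambda x$, and define the generalized moment functionals $\mathcal{E}_n(f):=\mathbb{E}[e^{n f(\overline{X}_n)}]$. With these identifications one has $\mathcal{E}_n(f_\lambda)=\mathbb{E}[e^{n\lambda\overline{X}_n}]$, so the associated limiting cumulant functional along $\mathcal{H}$ is exactly $\Gamma(\lambda)=\limsup_n \tfrac1n\ln\mathcal{E}_n(f_\lambda)$; its Fenchel--Legendre transform over the linear family $\mathcal{H}$ coincides with the ordinary transform $\Gamma^\ast(x)=\sup_{\lambda\in\R}(\lambda x-\Gamma(\lambda))$, and the abstract notion of an $\mathcal{H}$-exposed point with a well-behaved exposing functional unwinds to precisely the set $\tilde{\mathcal{E}}(\Gamma^\ast)$ defined above. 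I would then check, item by item, that the standing hypotheses of \cite[Theorem 5.4]{zapata} hold: that $\mu_n$ are capacities (immediate, since $\mathbb{V}$ is), that exponential tightness in the sense of our Definition matches the exponential tightness hypothesis there, and that the family $\mathcal{H}$ of continuous functions is rich enough to separate points of $\R$ (clear for linear functionals).

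The key steps, in order, are: (1) verify the translation of objects so that $\Gamma$ here equals the cumulant functional there and $\Gamma^\ast$ equals its conjugate; (2) verify the translation of the exposed-point condition, i.e.\ that $x\in\tilde{\mathcal{E}}(\Gamma^\ast)$ if and only if $x$ is an $\mathcal{H}$-exposed point of $\Gamma^\ast$ admitting an exposing element $f_\lambda$ for which the relevant $\limsup$ is a genuine limit and $\Gamma(t\lambda)<\infty$ for some $t>1$ --- exactly the regularity conditions imposed on the exposing hyperplane in \cite[Theorem 5.4]{zapata}; (3) confirm exponential tightness of $\{\mu_n\}$ is what we assume; and (4) quote \cite[Theorem 5.4]{zapata} to obtain the upper bound $\limsup_n \tfrac1n\ln\mu_n(F)\le-\inf_{x\in F}\Gamma^\ast(x)$ for closed $F$ and the lower bound $\liminf_n \tfrac1n\ln\mu_n(G)\ge-\inf_{x\in G\cap\tilde{\mathcal{E}}(\Gamma^\ast)}\Gamma^\ast(x)$ for open $G$, which are the two displayed inequalities.

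The main obstacle I expect is step (2): making sure the somewhat terse regularity conditions attached to the exposing hyperplane in our definition of $\tilde{\mathcal{E}}(\Gamma^\ast)$ (namely that the $\limsup$ in $\Gamma(\lambda)$ is attained as a limit, and $\Gamma(t\lambda)<\infty$ for some $t>1$) correspond exactly --- neither more nor less --- to whatever is required of an admissible exposing functional in \cite[Theorem 5.4]{zapata}. This is a bookkeeping matter rather than a deep one, but it is where a mismatch could silently weaken or invalidate the statement, so I would state the correspondence explicitly. A secondary, entirely routine point is checking that the upper bound does not need the exposed-point restriction while the lower bound does, which is already the shape of the classical G\"artner--Ellis theorem and carries over verbatim. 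Everything else --- subadditivity and monotonicity of $\mathbb{V}$, finiteness issues, the fact that $e^{n\lambda x}$ is a legitimate test function --- is immediate from the definitions recalled in Section \ref{sec:preliminaries}.
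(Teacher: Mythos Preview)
Your proposal is correct and takes essentially the same approach as the paper: the paper does not give a standalone proof of Theorem~\ref{thm:gartnerEllis} at all but simply states it as a direct consequence of \cite[Theorem 5.4]{zapata}, with the footnote recording exactly the identifications $\mathcal{E}_n(f):=\mathbb{E}[e^{n f(\overline{X}_n)}]$, $\mu_n(A):=\mathbb{V}(\overline{X}_n\in A)$, and $\mathcal{H}:=\{f_\lambda:\lambda\in\R\}$ that you spell out. Your additional verification steps (1)--(4) merely make explicit what the paper leaves implicit in that citation.
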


In the following, we adopt the same assumptions as those employed in the proof of Theorem 3.1 in \cite{CheFen}. We aim to study which conclusions can really be obtained from Theorem \ref{thm:gartnerEllis} under such   conditions.

\begin{assumption}\cite[Assumption 3.1]{CheFen}\label{ass:moments}
For every $\lambda\in\mathbb{R}$, the limit 
\[
{\Lambda}(\lambda):=\lim_{n\to\infty}\frac{1}{n}\sum_{i=1}^n \ln \mathbb{E}[e^{\lambda X_i}]
\]
exists as an extended real number. Furthermore, $0\in{\rm int}(\dom(\Lambda))$.
\end{assumption}

\begin{remark}\label{rem:independence}
We can establish some relations between the functions $\Lambda$ and $\Gamma$. If the random variables $\{X_n\}_{n\in\N}$ are negatively dependent then 
\[
\Gamma(\lambda) \leq \Lambda(\lambda) \quad \text{for all } \lambda \in \mathbb{R}.
\]
As a consequence, we have
\[
\Gamma^\ast(x) \geq \Lambda^\ast(x) \quad \text{for all } x \in \mathbb{R}.
\]
Moreover, both inequalities become equalities if $\{X_n\}_{n\in\N}$ are independent.

\end{remark}

Recall $\mathcal{E}(\Lambda^\ast)$ is the set of all exposed  points of $\Lambda^\ast$ with a exposing hyperplane $\lambda$ such that $\lambda\in{\rm int}({\rm dom}(\Lambda))$.
 Theorem \ref{thm:gartnerEllis} implies the following. 

\begin{corollary}\label{cor:gartnerEllis}
Suppose that $\{X_n\}_{n\in\N}$ are negatively dependent and that $\mathbb{E}[e^{\delta|X_n|}]<\infty$ for all $\delta>0$ and $n\in\N$. Let Assumption \ref{ass:moments} hold. Then, for all closed  $F\subset\mathbb{R}$ and open $G\subset\mathbb{R}$, 
\begin{equation}\label{eq:UpperGartnerEllis}
\limsup_{n\to\infty}\frac{1}{n}\ln\mathbb{V}(\overline{X}_n\in F)\le -\inf_{x\in F}{\Gamma}^\ast(x)\le -\inf_{x\in F}{\Lambda}^\ast(x)
\end{equation}
and
\begin{equation}\label{eq:LowerGartnerEllis}
\liminf_{n\to\infty}\frac{1}{n}\ln\mathbb{V}(\overline{X}_n\in G)\ge -\inf_{x\in G\cap \tilde{\mathcal{E}}({\Gamma}^\ast)}{\Gamma}^\ast(x).
\end{equation}
If, moreover, $\{X_n\}_{n\in\N}$ are independent, then ${\Lambda}={\Gamma}$, ${\Lambda}^\ast={\Gamma}^\ast$, ${\mathcal{E}}({\Lambda}^\ast)\subset\tilde{\mathcal{E}}({\Gamma}^\ast)$, and the conclusions of \cite[Theorem 3.1]{CheFen} hold true.
\end{corollary}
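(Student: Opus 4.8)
The plan is to derive Corollary \ref{cor:gartnerEllis} directly from Theorem \ref{thm:gartnerEllis} together with Remark \ref{rem:independence}, after verifying that the exponential-tightness hypothesis of the theorem is met under the stated moment assumption. First I would check exponential tightness of $\{\overline{X}_n\}_n$. Since $\E[e^{\del|X_n|}]<\infty$ for all $\del>0$ and the $X_n$ are negatively dependent and (implicitly, as in \cite{CheFen}) identically distributed, a Markov/Chebyshev estimate at the level of the upper expectation gives, for $t>0$, $\mathbb V(\overline X_n\ge R)\le e^{-ntR}\E[e^{t\sum_i X_i}]\le e^{-ntR}\prod_i\E[e^{tX_i}]=e^{-n(tR-\Lambda_t)}$ where $\Lambda_t$ is a finite constant (using negative dependence for the product bound and Assumption \ref{ass:moments} for finiteness); a symmetric bound handles $\overline X_n\le -R$. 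Taking $R$ large enough for fixed $t$ yields, for every $N$, a compact interval $K=[-R,R]$ with $\limsup_n \tfrac1n\ln\mathbb V(\overline X_n\in K^c)\le -N$. Hence Theorem \ref{thm:gartnerEllis} applies.

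Next I would simply quote Theorem \ref{thm:gartnerEllis} to obtain the two displayed bounds \eqref{eq:UpperGartnerEllis} and \eqref{eq:LowerGartnerEllis} with $\Gamma^\ast$; the second inequality in \eqref{eq:UpperGartnerEllis}, namely $-\inf_F\Gamma^\ast\le-\inf_F\Lambda^\ast$, is immediate from $\Gamma^\ast\ge\Lambda^\ast$, which is the content of Remark \ref{rem:independence} under negative dependence. That settles the first assertion of the corollary.

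For the independence part, Remark \ref{rem:independence} already gives $\Lambda=\Gamma$ and therefore $\Lambda^\ast=\Gamma^\ast$. The one genuinely non-formal point is the inclusion $\mathcal E(\Lambda^\ast)\subset\tilde{\mathcal E}(\Gamma^\ast)$: I must show that an exposed point $x$ of $\Lambda^\ast$ whose exposing hyperplane $\lambda$ lies in ${\rm int}({\rm dom}(\Lambda))$ is also an exposed point of $\Gamma^\ast$ (trivial since $\Lambda^\ast=\Gamma^\ast$, with the same $\lambda$) such that the $\limsup$ defining $\Gamma(\lambda)$ is a genuine limit and $\Gamma(t\lambda)<\infty$ for some $t>1$. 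Under independence, $\Gamma(\mu)=\Lambda(\mu)=\lim_n\tfrac1n\sum_i\ln\E[e^{\mu X_i}]$ exists as a limit for every $\mu\in\R$ by Assumption \ref{ass:moments}, so the ``$\limsup$ is a limit'' requirement holds automatically; and since $\lambda\in{\rm int}({\rm dom}(\Lambda))={\rm int}({\rm dom}(\Gamma))$, the whole segment $[\,0,(1+\eps)\lambda\,]$ lies in ${\rm dom}(\Gamma)$ for small $\eps>0$, giving $\Gamma(t\lambda)<\infty$ with $t=1+\eps>1$. This verifies $\mathcal E(\Lambda^\ast)\subset\tilde{\mathcal E}(\Gamma^\ast)$. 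Feeding this inclusion into \eqref{eq:LowerGartnerEllis} yields $\liminf_n\tfrac1n\ln\mathbb V(\overline X_n\in G)\ge-\inf_{x\in G\cap\mathcal E(\Lambda^\ast)}\Lambda^\ast(x)$, which together with the upper bound $\limsup_n\tfrac1n\ln\mathbb V(\overline X_n\in F)\le-\inf_{x\in F}\Lambda^\ast(x)$ is precisely the statement of \cite[Theorem 3.1]{CheFen}.

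The main obstacle I anticipate is the bookkeeping around ${\rm dom}(\Lambda)$ versus ${\rm dom}(\Gamma)$ and the precise matching of the definitions of $\mathcal E(\cdot)$ and $\tilde{\mathcal E}(\cdot)$: one must be careful that the condition ``$\lambda\in{\rm int}({\rm dom}(\Lambda))$'' in the definition of $\mathcal E(\Lambda^\ast)$ really does deliver both of the two separate requirements built into $\tilde{\mathcal E}(\Gamma^\ast)$ (the limit existing, and finiteness at $t\lambda$ for some $t>1$), and that convexity of $\Gamma$ (inherited from $\Lambda$, itself a limit of convex functions) is what lets one pass from interiority at $\lambda$ to finiteness on a slightly longer segment. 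The exponential-tightness verification is routine but should be written carefully since it is the sole hypothesis of Theorem \ref{thm:gartnerEllis} that is not handed to us.
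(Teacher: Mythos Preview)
Your proposal is correct and follows essentially the same route as the paper: verify exponential tightness via Markov's inequality combined with the negative-dependence product bound, invoke Theorem~\ref{thm:gartnerEllis}, and use Remark~\ref{rem:independence} to pass from $\Gamma^\ast$ to $\Lambda^\ast$.

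Two small corrections. First, the corollary does \emph{not} assume identical distribution, and your parenthetical ``implicitly, as in \cite{CheFen}'' is misplaced; fortunately your argument does not actually use it. What you need is that $\tfrac1n\sum_{i=1}^n\ln\E[e^{tX_i}]\to\Lambda(t)$, which is exactly Assumption~\ref{ass:moments}. Accordingly, your displayed equality $\prod_i\E[e^{tX_i}]=e^{-n(tR-\Lambda_t)}$ with $\Lambda_t$ constant is not literally correct for finite $n$; you should take $\limsup_n\tfrac1n\ln(\cdot)$ first and only then replace the Ces\`aro average by its limit $\Lambda(t)$, just as the paper does. Second, for the tightness step you must pick $t$ with $\Lambda(t)<\infty$ (and likewise some $t'<0$ for the left tail); this is where the hypothesis $0\in\mathrm{int}(\dom(\Lambda))$ from Assumption~\ref{ass:moments} enters. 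The moment condition $\E[e^{\delta|X_n|}]<\infty$ alone does not guarantee $\Lambda(t)<\infty$.

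Your treatment of the inclusion $\mathcal E(\Lambda^\ast)\subset\tilde{\mathcal E}(\Gamma^\ast)$ is in fact more detailed than the paper's, which simply asserts it; your justification (the limit exists by Assumption~\ref{ass:moments}, and $\lambda\in\mathrm{int}(\dom(\Lambda))$ gives $(1+\varepsilon)\lambda\in\dom(\Lambda)=\dom(\Gamma)$) is exactly what is needed. Note that convexity of $\dom(\Gamma)$ is not actually required here: interiority at $\lambda$ already places $(1+\varepsilon)\lambda$ in $\dom(\Gamma)$ for small $\varepsilon>0$ by a direct distance estimate.
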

\begin{proof}
We first prove that the sequence $\{\overline{X}_n\}_{n}$ is exponentially tight.  
Fixed $N\in\mathbb{N}$, consider the compact set $K_N:=[-N,N]$. 
Then,
\[
\mathbb{V}(\overline{X}_n\in K_N^c)\le \mathbb{V}(\overline{X}_n<-N) + \mathbb{V}(N<\overline{X}_n).
\]
Since $0$ is in the interior of $\dom{{\Lambda}}$, we can find $\varepsilon>0$ such that ${\Lambda}(\varepsilon)<\infty$ and ${\Lambda}(-\varepsilon)<\infty$. By the Markov inequality and negative dependence,
\[
\mathbb{V}(\overline{X}_n>N)\le \frac{\mathbb{E}[e^{n\varepsilon\overline{X}_n}]}{e^{n\varepsilon N}}\le \frac{\prod_{i=1}^n\mathbb{E}[e^{\varepsilon {X}_i}]}{e^{n\varepsilon N}}.
\]
From there, we get
\[
\frac{1}{n}\ln \mathbb{V}(\overline{X}_n>N)\le -\varepsilon N + \frac{1}{n}\sum_{i=1}^n\ln \mathbb{E}[e^{\varepsilon X_i}].
\]
Taking the limit superior as $n\to \infty$ results in
\[
\limsup_{n\to\infty}\frac{1}{n}\ln \mathbb{V}(\overline{X}_n>N)\le -\varepsilon N + {\Lambda}(\varepsilon).
\]
A similar argument leads to
\[
\limsup_{n\to\infty}\frac{1}{n}\ln \mathbb{V}(\overline{X}_n<-N)\le -\varepsilon N + {\Lambda}(-\varepsilon).
\]

Then,
\begin{align*}
\limsup_{n\to\infty}\frac{1}{n}\ln\mathbb{V}(\overline{X}_n\in K_N^c)&\le 
\limsup_{n\to\infty}\frac{1}{n}\ln\Big( \mathbb{V}(\overline{X}_n<-N)+\mathbb{V}(N<\overline{X}_n)\Big)\\
&\le 
\limsup_{n\to\infty}\frac{1}{n}\ln\Big( 2\max\left\{\mathbb{V}(\overline{X}_n<-N),\mathbb{V}(N<\overline{X}_n)\right\}\Big)\\
&=
\limsup_{n\to\infty}\frac{1}{n}\ln\Big(\max\left\{\mathbb{V}(\overline{X}_n<-N),\mathbb{V}(N<\overline{X}_n)\right\}\Big)\\
&= \max\left\{ \limsup_{n\to\infty}\frac{1}{n}\ln\mathbb{V}(\overline{X}_n<-N) , \limsup_{n\to\infty}\frac{1}{n}\ln\mathbb{V}(N<\overline{X}_n)\right\}\\
&\le-\varepsilon N + \max\left\{{\Lambda}(-\varepsilon)  ,{\Lambda}(\varepsilon) \right\}.
\end{align*}
Since the right-hand side goes to $-\infty$ as $N\to\infty$, the sequence $\{\overline{X}_n\}_{n\in\N}$ is  exponentially tight. 
Then,  inequalities \eqref{eq:UpperGartnerEllis} and \eqref{eq:LowerGartnerEllis} follow from Theorem \ref{thm:gartnerEllis} taking into account that $\Gamma^\ast(x) \geq \Lambda^\ast(x)$ (see Remark~\ref{rem:independence}).

If the random variables $\{X_n\}_{n\in\N}$ are independent, then $\Gamma^\ast=\Lambda^\ast$ as observed in Remark~\ref{rem:independence}. In that case, we have that $\mathcal{E}(\Lambda^\ast)=\mathcal{E}(\Gamma^\ast)\subset \tilde{\mathcal{E}}(\Gamma^\ast)$. Then,  it follows from \eqref{eq:LowerGartnerEllis} that 
\[
\liminf_{n\to\infty}\frac{1}{n}\ln\mathbb{V}(\overline{X}_n\in G)\ge -\inf_{x\in G\cap \tilde{\mathcal{E}}({\Gamma}^\ast)}{\Gamma}^\ast(x)\ge 
-\inf_{x\in G\cap {\mathcal{E}}({\Gamma}^\ast)}{\Lambda}^\ast(x),
\]
obtaining the lower bound in  \cite[Theorem 3.1]{CheFen}. 
Finally, the upper bound in \cite[Theorem 3.1]{CheFen} is immediate from  \eqref{eq:UpperGartnerEllis}.
\end{proof}

\begin{remark}
The inequality \eqref{eq:UpperGartnerEllis} in Corollary~\ref{cor:gartnerEllis} shows that the upper bound in \cite[Theorem 3.1]{CheFen} is correct; however, the lower bound cannot be right due to the counterexample provided in the previous section. This is consistent with what we already observed in Remark \ref{rem:bounds}.

On the other hand, 
as shown in Corollary~\ref{cor:gartnerEllis}, if we assume the stronger condition of independence, then the lower bound in \cite[Theorem 3.1]{CheFen} holds  and the conclusions of \cite[Theorem 3.1]{CheFen} are valid. 
\end{remark}

\section{Concluding remarks}\label{sec:conclusions}

Our example relies on finding an upper probability and its nonlinear expectation, $\mathbb V$ and $\E$, such that the asymptotic behaviour of variables in that space is directly linked to  ordinary probability measures and their expectation functionals. Observe that almost sure convergence
$$P(|\overline X_n-E_P[X_1]|\not\to 0)=0$$
is equivalent, since $\mathbb V=P(2-P)$, to
$$\mathbb V(|\overline X_n-E_P[X_1]|\not\to 0)=0,$$
so $\overline X_n$ does not converge $\mathbb V$-almost surely to $\E[X_1]$ but to $E_P[X_1]$ (see \cite{Chareka} for a related argument in the case of the weak law of large numbers). 
Thus, we show that a naive `word-by-word' extension of the large deviation principle to negatively dependent random variables creates a contradiction with the probabilistic form of this theorem. As shown, that does not preclude negatively dependent sequences from satisfying a large deviation principle under an upper expectation. It only establishes that the rate function cannot be calculated as the Fenchel--Legendre transform of the function $\Lambda(\lambda)=\ln\E[e^{\lambda X}]$. 

%Another possible avenue of modification is to consider a bracketing between a lower rate function and an upper rate function in line with. This will be a matter of future research. 

The assumption that $\varphi_i$ are Borel functions in the definition of negative dependence and independence can be weakened as the proof in Section 4 only uses functions of the form $\varphi_i(x)=e^{{\delta}x}$.

\begin{appendix}

\section{Proofs of some auxiliary results}
\label{proofs}

In this section we provide the proofs of Lemma \ref{lem:hypothesis} and Lemma \ref{lem:conjugate}. 
As in Section \ref{counter}, let $(\Omega, \mathcal{A}, P)$ be the real line $\mathbb{R}$ equipped with its Borel $\sigma$-algebra and a probability measure $P$. Define $\mathbb{V}$ as the capacity given by $\mathbb{V}(A) = P(A)(2 - P(A))$. Additionally, we fix a sequence $\{X_n\}_{n \in\N}$ of $P$-i.i.d.~Bernoulli random variables with parameter $p \in (0,1)$ and, for each $n$, define the sample mean $\overline{X}_n=\tfrac{1}{n}\sum_{i=1}^n X_n$.  

In order to prove Lemma~\ref{lem:hypothesis}, we  use several results from \cite{TAMS}. Since the language for capacities in that paper differs from the definitions in this one, the reader is referred to \cite[Section 1]{TAMS} for more detailed information. For our purposes, it suffices to know that `capacity$\Rightarrow$topological capacity' in the terminology of that paper. 

Recall that $E_{\mathbb V}$ denotes the Choquet integral against the capacity $\mathbb{V}$. We have the following.
\begin{lemma}\label{lemoso}
The capacity $\mathbb V$ is an upper probability. Moreover, its associated upper expectation is $\E=E_{\mathbb V}$.
\end{lemma}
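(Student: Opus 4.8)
The statement claims that $\mathbb{V}(A) = P(A)(2-P(A))$ is an upper probability and that its upper expectation coincides with the Choquet integral $E_{\mathbb{V}}$. The plan is to exploit the structure of $\mathbb{V}$ as a monotone transform of $P$ via the concave increasing function $g(x) = x(2-x) = 1-(1-x)^2$ on $[0,1]$, so that $\mathbb{V} = g \circ P$ is a \emph{distortion} of the probability $P$.

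First I would verify that $\mathbb{V}$ is \emph{2-monotone} (in fact completely monotone). Because $g$ is concave and increasing with $g(0)=0$, $g(1)=1$, the distorted capacity $g\circ P$ is known to be 2-monotone; concretely one checks $g(P(A\cup B)) + g(P(A\cap B)) \ge g(P(A)) + g(P(B))$ using concavity of $g$ together with $P(A\cup B)+P(A\cap B) = P(A)+P(B)$ and $P(A\cap B)\le \min\{P(A),P(B)\}\le\max\{P(A),P(B)\}\le P(A\cup B)$. (Alternatively, since $g(x) = 1-(1-x)^2$ and $1-x$ is an additive-to-multiplicative type transform, one can present $\mathbb{V}$ directly as a supremum of measures.) A 2-monotone capacity that is also continuous is automatically an upper probability with representing set $\mathcal{P} = \{Q : Q(A)\le \mathbb{V}(A)\ \forall A\}$, the core of $\mathbb{V}$; this is the classical result (Schmeidler / Choquet) that $\mathbb{V}(A) = \max_{Q\in\core(\mathbb{V})} Q(A)$ for every $A$. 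This establishes the first assertion.

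Next I would identify the upper expectation $\E[X] = \sup_{Q\in\core(\mathbb{V})} E_Q[X]$ with the Choquet integral $E_{\mathbb{V}}[X]$. For 2-monotone (hence supermodular) capacities this is again a classical theorem: the Choquet integral is the upper envelope over the core, i.e. $E_{\mathbb{V}}[X] = \max_{Q\in\core(\mathbb{V})} E_Q[X]$ for all bounded measurable $X$, and one extends to the relevant unbounded $X$ (those with $\E[e^{\delta|X|}]<\infty$, which is all that is used later) by monotone truncation and the continuity of $\mathbb{V}$ to pass the limit through both sides. Since the representing family $\mathcal{P}$ in the definition of an upper probability can be taken to be the core, this yields $\E = E_{\mathbb{V}}$.

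\textbf{Main obstacle.} The delicate point is not the abstract 2-monotonicity but making sure the Choquet-integral-equals-core-envelope identity is invoked at the right level of generality: the cited result \cite{TAMS} is stated for ``topological capacities,'' and one must check the continuity hypothesis (i) of Lemma~\ref{lem:hypothesis} is exactly what licenses the identification, and that the unbounded random variables appearing in the moment conditions (iii) are covered, e.g. by writing $X = X^+ - X^-$, truncating at level $k$, using that $E_{\mathbb{V}}$ and $\sup_Q E_Q$ agree on bounded variables, and letting $k\to\infty$ with continuity of $\mathbb{V}$ controlling the tails. That truncation/limit argument, together with correctly quoting the supermodularity-to-Choquet theorem from \cite{TAMS}, is where the real work lies; the algebraic verification that $g$ is concave and increasing is immediate.
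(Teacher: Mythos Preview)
Your overall strategy---identify $\mathbb V=g\circ P$ as a distortion capacity and invoke the Choquet/Schmeidler representation of the Choquet integral as an envelope over a core---is the right instinct, and it is close in spirit to what the paper does. But the central computation is wrong in sign. With $g(x)=x(2-x)$ concave and $P(A\cap B)\le\min\{P(A),P(B)\}\le\max\{P(A),P(B)\}\le P(A\cup B)$, $P(A\cap B)+P(A\cup B)=P(A)+P(B)$, concavity yields
\[
g(P(A\cup B))+g(P(A\cap B))\ \le\ g(P(A))+g(P(B)),
\]
the \emph{reverse} of what you wrote. Thus $\mathbb V$ is $2$-\emph{alternating} (submodular), not $2$-monotone, and certainly not completely monotone. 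The Schmeidler result you quote, in the form ``$2$-monotone $\Rightarrow$ $\mathbb V(A)=\max_{Q\le\mathbb V}Q(A)$ and $E_{\mathbb V}=\max_{Q\le\mathbb V}E_Q$'', is therefore being invoked with the wrong hypothesis (for $2$-monotone capacities the envelope is a \emph{minimum} over $\{Q\ge v\}$, giving a lower, not upper, probability).

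The paper avoids this trap precisely by passing to the dual: it computes $v(A)=1-\mathbb V(A^c)=P(A)^2$, notes that $x\mapsto x^2$ is \emph{convex}, so $v$ is completely monotone (hence $2$-monotone), applies the inf-representation $E_v[Y]=\inf_{Q\ge v}E_Q[Y]$ from \cite{TAMS}, and then dualizes via $E_{\mathbb V}[Y]=-E_v[-Y]=\sup_{Q\ge v}E_Q[Y]$. Your argument can be repaired in one of two ways: either (i) replace ``$2$-monotone'' by ``$2$-alternating'' throughout and cite the correct version of the core theorem (supremum over the anticore $\{Q\le\mathbb V\}$), or (ii) follow the paper and work with the convex distortion $P^2$ on the dual side. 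Either fix is short; but as written, the claimed inequality is false and the appeal to Schmeidler does not apply.
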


\begin{proof}
Let $v$ denote the dual capacity of $\mathbb V$. For any $A\in\mathcal B_\R$,
$$v(A)=1-\mathbb V(A^c)=1-P(A^c)(2-P(A^c))=(1-P(A^c))^2=P(A)^2.$$
By \cite[Proposition 5.6.(a)]{TAMS}, $v$ is a completely monotone capacity in the sense of \cite{TAMS}, in particular a 2-monotone topological capacity in the sense of that paper. Hence \cite[Proposition 3.5]{TAMS} applies: for any random variable $Y$ such that $E_v[Y]$ is finite,
$$E_v[Y]=\inf_{Q\ge v}E_Q[Y],$$
where $Q$ ranges over all probability measures such that $Q(A)\ge v(A)$ for all $A\in\mathcal B_\R$. By \cite[Lemma 3.4]{TAMS},
$$E_{\mathbb V}[Y]=-E_v[-Y]=\sup_{Q\ge v}(-E_Q[-Y])=\sup_{Q\ge v}E_Q[Y]$$
so $E_{\mathbb V}$ is an upper expectation. Applying the same formula to an arbitrary indicator function $I_A$ we have
$$\mathbb V(A)=E_{\mathbb V}[I_A]=\sup_{Q\ge v}E_Q[I_A]=\sup_{Q\ge v}Q(A)$$
so indeed $\mathbb V$ is an upper probability and $\E=E_{\mathbb V}$.
\end{proof}

In the sequel, $\E$ will always denote the upper expectation associated to $\mathbb V$.

\begin{lemma}\label{max}
Let $Y$ be a random variable such that $\E[Y]$ is finite. Then, for any random variables $Z,Z'$ which are $P$-i.i.d.~as $Y$, the variable $\max\{Z,Z'\}$ is $P$-integrable and
$$\E[Y]=E_P[\max\{Z,Z'\}].$$
\end{lemma}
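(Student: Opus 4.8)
The plan is to exploit the explicit form $\mathbb V=P(2-P)$, equivalently the dual capacity $v=P^2$ established in Lemma~\ref{lemoso}, and to recognize the Choquet integral $E_{\mathbb V}$ as an expectation against a maximum of two i.i.d.\ copies. First I would recall from Lemma~\ref{lemoso} that $\E[Y]=E_{\mathbb V}[Y]$, so it suffices to compute the Choquet integral of $Y$ against $\mathbb V$. Using the layer-cake formula
\[
E_{\mathbb V}[Y]=\int_0^\infty \mathbb V(Y\ge t)\,\dd t-\int_{-\infty}^0\big(1-\mathbb V(Y\ge t)\big)\,\dd t,
\]
and substituting $\mathbb V(Y\ge t)=1-v(Y<t)$ together with the pointwise identity $1-\mathbb V(Y\ge t)=v(Y<t)=P(Y<t)^2$, I would rewrite everything in terms of the ordinary distribution function of $Y$ under $P$.

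The key computation is the observation that if $Z,Z'$ are $P$-i.i.d.\ copies of $Y$, then for the maximum $M:=\max\{Z,Z'\}$ one has, for every $t\in\R$,
\[
P(M<t)=P(Z<t)\,P(Z'<t)=P(Y<t)^2=v(Y<t),
\]
and correspondingly $P(M\ge t)=1-P(Y<t)^2=\mathbb V(Y\ge t)$. Hence $M$ has exactly the ``distribution'' encoded by $\mathbb V_Y$: its genuine probabilistic survival function coincides with the capacity survival function of $Y$. Plugging this into the layer-cake representation of the Lebesgue integral $E_P[M]$ gives term-by-term the same two improper Riemann integrals appearing in $E_{\mathbb V}[Y]$, so $E_P[M]=E_{\mathbb V}[Y]=\E[Y]$. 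For the integrability of $M$: since $\E[Y]$ is finite, both layer integrals defining $E_{\mathbb V}[Y]$ are finite, which by the identity above are precisely $\int_0^\infty P(M\ge t)\,\dd t$ and $\int_{-\infty}^0 P(M<t)\,\dd t$; finiteness of both is equivalent to $E_P|M|<\infty$, i.e.\ $M\in L^1(P)$.

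The main obstacle I anticipate is purely bookkeeping around the sign conventions and the split at $0$ in the Choquet/layer-cake formulas, making sure the negative part is handled with $1-\mathbb V(Y\ge t)=P(Y<t)^2$ rather than $\mathbb V(Y<t)$, and confirming that the finiteness hypothesis on $\E[Y]$ indeed forces \emph{both} improper integrals to converge (not merely their difference), which is what delivers $P$-integrability of $M$ rather than just existence of its Choquet integral. Once the distributional identity $P(M\ge t)=\mathbb V(Y\ge t)$ is in hand, the rest is a direct matching of integrals and no further structural input is needed.
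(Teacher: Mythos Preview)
Your plan is correct and complete. The paper's own proof proceeds differently: it simply invokes \cite[Proposition 5.6.(c)]{TAMS}, which directly asserts $E_{\overline{P^2}}[Y]=E_P[\max\{Z,Z'\}]$ for the dual of the capacity $P^2$, and then appeals to Lemma~\ref{lemoso} for $\E=E_{\mathbb V}$. So the paper treats the identity as a black box from the literature, whereas you unpack it by the elementary distributional computation $P(\max\{Z,Z'\}\ge t)=1-P(Y<t)^2=\mathbb V(Y\ge t)$ and match the two layer-cake integrals term by term. Your route is more self-contained and makes the mechanism transparent (and in particular makes the $P$-integrability of $\max\{Z,Z'\}$ explicit via the finiteness of both layer integrals, which the paper's citation leaves implicit); the paper's route is shorter but relies on external machinery. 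The bookkeeping concern you flag is harmless: since both integrands in the Choquet formula are nonnegative, finiteness of $\E[Y]=E_{\mathbb V}[Y]$ indeed forces each improper integral to be finite separately, which is exactly $E_P|M|<\infty$.
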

\begin{proof}
As shown in the previous proof, $\mathbb V$ is the dual capacity of $P^2$. From \cite[Proposition 5.6.(c)]{TAMS} (where, like in this paper, the notation $\overline{P^2}$ represents the dual capacity of $P^2$),
$$E_{\mathbb V}[Y]=E_P[\max\{Z,Z'\}].$$
But, by Lemma \ref{lemoso}, $\E[Y]=E_{\mathbb V}[Y]$.
\end{proof}

A crucial element of our argument is as follows.

\begin{prop}\label{negdep}
Let $\{Y_n\}_{n\in\N}$ be a sequence of random variables on $(\Omega,\mathcal{A},P)$.
If $\{Y_n\}_{n\in\N}$ are i.i.d.~with respect to $P$, then $\{Y_n\}_{n\in\N}$ are negatively dependent and identically distributed with respect to $\mathbb E$.
\end{prop}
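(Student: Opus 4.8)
The plan is to reduce the claim about $\mathbb E$ to a statement about ordinary expectations via Lemma~\ref{max}, which identifies $\E[Y]$ with $E_P[\max\{Z,Z'\}]$ for an independent copy $Z'$ of $Z\sim Y$. First I would dispose of \emph{identically distributed}: given $i,j\in\N$ and a Borel function $\varphi\colon\R\to[0,\infty)$, both $\varphi(Y_i)$ and $\varphi(Y_j)$ have the same law under $P$ (since the $Y_n$ are i.i.d.), so by Lemma~\ref{max} applied to $Y=\varphi(Y_i)$ and to $Y=\varphi(Y_j)$ we get $\E[\varphi(Y_i)]=E_P[\max\{W,W'\}]=\E[\varphi(Y_j)]$, where $W,W'$ are i.i.d.\ copies of the common distribution; in particular finiteness holds simultaneously. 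This handles the easy half.

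For \emph{negative dependence}, fix $n$ and Borel functions $\varphi_1,\dots,\varphi_{n+1}\colon\R\to[0,\infty)$, and set $U:=\varphi_1(Y_1)\cdots\varphi_n(Y_n)$ and $V:=\varphi_{n+1}(Y_{n+1})$, assuming the relevant upper expectations finite. The key observation is that because the $Y_n$ are $P$-i.i.d., $U$ and $V$ are $P$-independent and nonnegative. Now introduce on an enlarged probability space four random variables: $(U_1,V_1)$ and $(U_2,V_2)$ two i.i.d.\ copies of $(U,V)$ (so each pair has the joint law of $(U,V)$, and the two pairs are mutually independent, hence all four coordinates are independent since $U\perp V$). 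By Lemma~\ref{max},
\[
\E[UV]=E_P\big[\max\{U_1V_1,\,U_2V_2\}\big],\qquad
\E[U]=E_P\big[\max\{U_1,U_2\}\big],\qquad
\E[V]=E_P\big[\max\{V_1,V_2\}\big].
\]
So the desired inequality $\E[UV]\le \E[U]\cdot\E[V]$ becomes the purely probabilistic assertion
\[
E_P\big[\max\{U_1V_1,U_2V_2\}\big]\;\le\;E_P\big[\max\{U_1,U_2\}\big]\cdot E_P\big[\max\{V_1,V_2\}\big].
\]
Pointwise one has $\max\{U_1V_1,U_2V_2\}\le \max\{U_1,U_2\}\cdot\max\{V_1,V_2\}$ (each term $U_kV_k$ is dominated by the product of the two maxima since all quantities are nonnegative). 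Taking $E_P$ and then using that $\max\{U_1,U_2\}$ depends only on $(U_1,U_2)$ while $\max\{V_1,V_2\}$ depends only on $(V_1,V_2)$, and these two blocks are independent under $P$, the expectation of the product factorizes, giving exactly the right-hand side.

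The main obstacle — and the point that needs care rather than cleverness — is the bookkeeping of the independence structure: one must check that the four-variable coupling with $(U_1,V_1)$, $(U_2,V_2)$ i.i.d.\ copies of $(U,V)$ really does make $\{U_1,U_2\}$ independent of $\{V_1,V_2\}$, which uses crucially that $U\perp V$ under $P$ (itself a consequence of the $Y_n$ being $P$-i.i.d.\ with $U$ a function of $Y_1,\dots,Y_n$ and $V$ a function of $Y_{n+1}$), and that Lemma~\ref{max} is applicable, i.e.\ the $P$-integrability of the relevant maxima, which Lemma~\ref{max} itself guarantees once the corresponding $\E$-value is finite. A secondary point is the possibility of $+\infty$ values for $U$ or $V$, or non-integrability; but the statement of negative dependence only requires the inequality when the upper expectations are finite, so Lemma~\ref{max} delivers finiteness of $E_P[\max\{U_1,U_2\}]$ and $E_P[\max\{V_1,V_2\}]$, and the pointwise domination then forces $E_P[\max\{U_1V_1,U_2V_2\}]$ to be finite as well, so no issues with $\infty\cdot 0$ or undefined differences arise. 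Once these checks are in place the proof is complete.
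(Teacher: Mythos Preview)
Your proof is correct and follows essentially the same approach as the paper: both reduce to Lemma~\ref{max}, then exploit the pointwise bound $\max\{U_1V_1,U_2V_2\}\le\max\{U_1,U_2\}\cdot\max\{V_1,V_2\}$ together with independence to factor the expectation. The only cosmetic difference is that the paper, instead of invoking an enlarged space, manufactures the independent copies directly from the given i.i.d.\ sequence by taking $U_1=\varphi_1(Y_1)\cdots\varphi_n(Y_n)$, $U_2=\varphi_1(Y_{n+2})\cdots\varphi_n(Y_{2n+1})$, $V_1=\varphi_{n+1}(Y_{n+1})$, $V_2=\varphi_{n+1}(Y_{2n+2})$.
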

\begin{proof}
Fix $n\in\N$. Let $\varphi_1,\ldots,\varphi_{n+1}:\R\to[0,\infty)$ be measurable functions.
Let us prove the negative dependence of $Y_n$ with respect to $\E$. By Lemma \ref{max},
$$\E[\varphi_1(Y_1)\ldots\varphi_n(Y_n)]=E_P[\max\{Z,Z'\}]$$
where $Z,Z'$ are $P$-independent random variables identically distributed as $\varphi_1(Y_1)\ldots\varphi_n(Y_n)$. Therefore
$$\E[\varphi_1(Y_1)\ldots\varphi_n(Y_n)]=E_P[\max\{\varphi_1(Y_1)\ldots\varphi_n(Y_n),\varphi_1(Y_{n+2})\ldots\varphi_n(Y_{2n+1})\}].$$
Analogously,
$$\E[\varphi_{n+1}(Y_{n+1})]=E_P[\max\{\varphi_{n+1}(Y_{n+1}),\varphi_{n+1}(Y_{2n+2})\}]$$
and then, by the $P$-independence of all $2n+2$ variables,
$$\E[\varphi_1(Y_1)\ldots\varphi_n(Y_n)]\cdot \E[\varphi_{n+1}(Y_{n+1})]$$
$$=E_P[\max\{\varphi_1(Y_1)\ldots\varphi_n(Y_n),\varphi_1(Y_{n+2})\ldots\varphi_n(Y_{2n+1})\}\cdot \max\{\varphi_{n+1}(Y_{n+1}),\varphi_{n+1}(Y_{2n+2})\}]$$
$$\ge E_P[\max\{\varphi_1(Y_1)\ldots\varphi_n(Y_n)\cdot \varphi_{n+1}(Y_{n+1}), \varphi_1(Y_{n+2})\ldots\varphi_n(Y_{2n+1})\cdot \varphi_{n+1}(Y_{2n+2})\}]$$
$$=\E[\varphi_1(Y_1)\ldots\varphi_n(Y_n)\cdot \varphi_{n+1}(Y_{n+1})].$$

The proof of identical distribution is similar. Let $i\neq j$ and let $\varphi:\R\to\R$ be measurable. Since all $Y_n$ are independent and identically distributed with respect to $P$, all $(Y_n,Y_{n+1})$ are $P$-i.i.d.~random vectors, and then all $\max\{\varphi(Y_n),\varphi(Y_{n+1})\}$ are $P$-i.i.d.~random variables as well. Accordingly,
$$\E[\varphi(Y_i)]=E_P[\max\{\varphi(Y_i),\varphi(Y_{j})\}]=\E[\varphi(Y_j)].$$
The proof is complete.
\end{proof}

We are ready to prove Lemma \ref{lem:hypothesis}.

\begin{proof}

Lemma \ref{lemoso} proved that $\mathbb V$ is an upper probability.

{\it Part (i).} Continuity is obvious from that of $P$. 

{\it Part (ii).} The sequence $\{X_n\}_n$ is negatively dependent and identically distributed with respect to $\mathbb{E}$ due to Proposition \ref{negdep}.

{\it Part (iii).} Since $0\le X_i\le 1$,
$$\E[e^{\del|X_i|}]\le e^\del<\infty.$$

{\it Part (iv).} For any fixed $\lambda\in\R$, by Lemma \ref{max},
\begin{align*}
\E[e^{\lambda X_i}]&=E_P[\max\{e^{\lambda X_i},e^{\lambda X_{i+1}}\}]\\
&=(1-p)^2+ 2p(1-p)\max\{1,e^{\lambda}\}+ p^2 e^\lambda.
\end{align*}
Therefore
\begin{align*}
\Lambda(\lambda)&=\lim_n \frac{1}{n}\sum_{i=1}^n\ln\E[e^{\lambda X_i}]\\
&=
\begin{cases}
\ln\left(1-p^2+ p^2 e^\lambda\right), & \mbox{ if }\lambda< 0;\\
\ln\left((1-p)^2+p(2-p)e^\lambda\right), & \mbox{ if }\lambda\ge 0.
\end{cases}
\end{align*}
In particular, $\Lambda(\lambda)=\lim_n \tfrac{1}{n}\sum_{i=1}^n\ln\E[e^{\lambda X_i}]$
is well defined.

{\it Part (v).} This is trivial since $\dom(\Lambda)=\R$.
\end{proof}

We now turn to the proof of Lemma~\ref{lem:conjugate}. 

\begin{proof}
{\it Part (i).} 
We first notice 
\[
\Lambda(\lambda)=
\begin{cases}
\Lambda_{p^2}(\lambda), & \mbox{ if }\lambda< 0;\\
\Lambda_{p(2-p)}(\lambda), & \mbox{ if }\lambda\ge 0,
\end{cases}
\]
where $\Lambda_t(\lambda):=\ln(1-t + t e^\lambda)$ is the cumulant generating function of a  Bernoulli distribution with parameter $t\in(0,1)$.  Applying \cite[Lemma 2.2.5]{dembo}, the rate function $I_t$ of this distribution verifies
\begin{equation}\label{eq:FenchelBern}
I_t(x)=\Lambda^\ast_t(x)
=
\begin{cases}
\sup_{\lambda\ge 0}(x\lambda - \Lambda_t(\lambda)), & \mbox{ if }x\ge t;\\
\sup_{\lambda< 0}(x\lambda - \Lambda_t(\lambda)), & \mbox{ if }x< t.
\end{cases}
\end{equation}
Fixed $x\in\mathbb{R}$, consider the function $f\colon\mathbb{R}\to [-\infty,\infty)$ defined by
\[
f(\lambda):=x\lambda - \Lambda(\lambda)=
\begin{cases}
f_{p^2}(\lambda), & \mbox{ if }\lambda< 0;\\
f_{p(2-p)}(\lambda), & \mbox{ if }\lambda\ge  0,
\end{cases}
\]
where $f_t(\lambda):=x\lambda - \Lambda_t(\lambda)=x\lambda - \ln(1-t + t e^\lambda)$. 
We have 
\begin{equation}\label{eq:supremum}
\Lambda^\ast(x)=\sup_{\lambda\in\mathbb{R}} f(\lambda)=\max\left\{\sup_{\lambda\le 0} f_{p^2}(\lambda)  ,\sup_{\lambda\ge 0} f_{p(2-p)}(\lambda) \right\}.
\end{equation}

In general, for any given $t$, the function $f_t$ is  increasing on $(-\infty,0]$ if $x\ge t$, and $f_t$ is  decreasing on $[0,\infty)$ if $x\le t$. 
Taking into account this we discuss the value of \eqref{eq:supremum}, distinguishing three cases:

{\em Case I.} Assume $x\in [p^2,p(2-p)]$. We have that $f_{p^2}$ is  increasing on $(-\infty,0]$, and  $f_{p(2-p)}$ is  decreasing on $[0,\infty)$. Thus, due to \eqref{eq:supremum}, we get $\Lambda^\ast(x)=f(0)=0$.

{\em Case II.} Assume $x>p(2-p)$. Since, in particular, $x\ge p^2$, we have that $f_{p^2}$ is increasing on $(-\infty,0]$. Thus, due to \eqref{eq:supremum},  
\[
\Lambda^\ast(x)=\sup_{\lambda\ge 0} f_{p(2-p)}(\lambda)=\Lambda_{p(2-p)}^\ast(x)=I_{p(2-p)}(x), 
\]
where the last equality follows from \eqref{eq:FenchelBern}.

{\em Case III.} Assume $x<p^2$. Since, in particular, $x\le p(2-p)$, we have that $f_{p(2-p)}$ is  decreasing on $[0,\infty)$. Thus, due to \eqref{eq:supremum}, 
\[
\Lambda^\ast(x)=\sup_{\lambda\le 0} f_{p^2}(\lambda)=\Lambda_{p^2}^\ast(x)=I_{p^2}(x), 
\]
where the last equality follows from \eqref{eq:FenchelBern}. 

In summary, we obtain
\[
\Lambda^\ast(x)=
\begin{cases}
I_{p^2}(x), & \mbox{ if }x< p^2;\\
0,  & \mbox{ if }x\in [p^2,p(2-p)];\\
I_{p(2-p)}(x), & \mbox{ if }x> p(2-p).
\end{cases}
\]
This completes the proof of (i) in Lemma~\ref{lem:conjugate}. 

{\it Part (ii).} We need to prove that  $$\mathcal{E}(\Lambda^\ast)=(0,p^2)\cup\Big(p(2-p),1\Big).$$ 
 A point $y\in (0,1)$ is exposed if and only if there exists  $\lambda\in\mathbb{R}$ such the function $g_{\lambda}:=\lambda x - I(x)$ attains a maximum at $y$ which  is not attained at any other point. 
 For every $\lambda\in\mathbb{R}$, the function $g_{\lambda}$ is differentiable on $(0,1)$. Thus, $y\in(0,1)$ is  exposed  if and only if there exists $\lambda$ such that $y$ is the unique point such that $g^\prime_\lambda(y)=0$ and  $g^{\prime\prime}_\lambda(y)<0$. 

Let $y\in (0,p^2)$. 
Since
$$g^\prime_\lambda(x)=\lambda-\ln\frac{x(1-p^2)}{(1-x)p^2}\mbox{ for }x\in (0,p^2),$$
for the value
$$\lambda=\ln\frac{y(1-p^2)}{(1-y)p^2}$$
the function $g_\lambda$ has a critical point at $y$. Furthermore, this critical point is unique because $g^\prime_\lambda$ has no other zeros besides $y$. Since
$$g^{\prime\prime}(y)=\frac{-1}{y(1-y)}<0,$$
indeed $y$ is a maximum. We conclude  $(0,p^2)\subset \mathcal{E}(\Lambda^\ast)$.

A similar argument shows that $(p(2-p),1)\subset \mathcal{E}(\Lambda^\ast)$. 

Let $y\in [p^2,p(2-p)]$. Since 
$$g^\prime_\lambda(x)=\lambda\mbox{ for }x\in [p^2,p(2-p)],$$
we have that, if $y$ is a critical point, then $\lambda=0$. In that case, $g_\lambda=0$ on $ [p^2,p(2-p)]$, and $y$ is not the unique critical point because, in fact, all the points in $ [p^2,p(2-p)]$ are critical. We conclude that $[p^2,p(2-p)]$ has no exposed points. 

Finally, $\mathbb{R}\setminus(0,1)$ has no exposed points. By contradiction, assume that $y\in \mathbb{R}\setminus(0,1)$ is  exposed. Taking any exposing hyperplane $\lambda$ and any $x\in\mathbb{R}$ we have 
\[
-\infty\le \lambda x - \Lambda^\ast(x) < \lambda x -\Lambda^\ast(y)=\lambda x - \infty=-\infty.
\]
We get $-\infty<-\infty$, reaching  a contradiction. This completes the proof of (ii) in Lemma~\ref{lem:conjugate}.
\end{proof}

\end{appendix}

\vskip 2 true cm

\noindent{\small
{\bf Acknowledgements. }
The second author was partially supported by Fundaci\'{o}n S\'{e}neca - ACyT Regi\'{o}n de Murcia project
21955/PI/22, and by Grant PID2022-137396NB-I00 funded by MICIU/AEI/10.13039/501100011033 and ``ERDF A way of making Europe''.
}

\end{document}